\newcommand{\Z}{{\mathbb{Z}}}
\newcommand{\R}{{\mathbb{R}}}
\newcommand{\C}{{\mathbb{C}}}
\def\spt{{\mathrm{spt}}}
\DeclareMathOperator{\re}{Re}
\DeclareMathOperator{\im}{Im}
\newcommand\be{\begin{equation}}
\newcommand\ee{\end{equation}}
\newcommand\bee{\begin{equation*}}
\newcommand\eee{\end{equation*}}
\newcommand\ben{\begin{enumerate}}
\newcommand\een{\end{enumerate}}
\newcommand{\defeq}{\vcentcolon=}
\def\SL{{\rm SL}}
\renewcommand{\(}{\left(}
\renewcommand{\)}{\right)}
\newcommand{\la}{\left|}
\newcommand{\ra}{\right|}
\newtheorem{theorem}{Theorem}
\newtheorem{lemma}[theorem]{Lemma}
\newtheorem{corollary}[theorem]{Corollary}
\newtheorem{proposition}[theorem]{Proposition}
\theoremstyle{remark}
\numberwithin{equation}{section}
\numberwithin{theorem}{section}
\numberwithin{lemma}{section}
\numberwithin{proposition}{section} 
\numberwithin{example}{section}
\numberwithin{definition}{section}
\numberwithin{corollary}{section}
\author{Oscar E. Gonz\'alez}
\address{Department of Mathematics, University of Illinois at Urbana-Champaign, Urbana, IL 61801}
\email{oscareg2@illinois.edu}
\title{
Effective estimates for the smallest parts function}
\date{\today}
\begin{document}
\begin{abstract}
We give a substantial improvement for the error term in the asymptotic formula for the smallest parts function  $\spt(n)$ of Andrews.
Our methods depend on an explicit bound for sums of Kloosterman sums of half integral weight on the full modular group.
 \end{abstract}
\maketitle

\section{Introduction}   
The smallest parts function $\spt(n)$, introduced by Andrews \cite{andrews}, is defined for any integer $n\geq1$ as the number of 
smallest parts among the integer partitions of $n$.  For example, the partitions of $n=4$ are (with the smallest parts underlined)
\begin{align*}
&\underline{4},\\
&3+\underline{1},\\
&\underline{2}+\underline{2},\\
&2+\underline{1}+\underline{1},\\
&\underline{1}+\underline{1}+\underline{1}+\underline{1},
\end{align*}
and so $\spt(4)=10$.  
Apart from its combinatorial significance, this function is also of interest because
 the generating function is closely related to
a weak harmonic Maass form (see \eqref{eq:harmonicF}),
and it has been the topic of much recent study.
Define
\be
\label{eq:lambda}
\lambda(n) \defeq \frac{\pi}{6} \sqrt{24n-1}.
\ee
Refining an asymptotic result of Bringmann \cite{bringmann}, Locus Dawsey and Masri 
used the algebraic formula for the 
smallest parts function (\cite[Thm. 2]{AA})
and traces of singular moduli
to 
prove the following asymptotic formula for the smallest parts function $\spt(n)$.
\begin{theorem}[\cite{LM}, Thm. 1.1]
\label{SptLM} 
 Let $\lambda(n)$ be as in \eqref{eq:lambda}. 
 Then for all $n \geq 1$, we have 
\bee
\spt(n) = \frac{\sqrt{3}}{\pi \sqrt{24n-1}} e^{\lambda(n)} + E_{s}(n),
\eee
where 
\bee
|E_s(n)| < (3.59 \times 10^{22}) 2^{q(n)}(24n-1)^2 e^{\frac{\lambda(n)}{2}}
\eee
and
\bee
q(n) \defeq \frac{\log(24n-1)}{|\log(\log(24n-1))-1.1714|}.
\eee
\end{theorem}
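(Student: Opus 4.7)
The plan is to follow the Locus Dawsey--Masri approach: combine the algebraic formula of Ahlgren--Andersen for $\spt(n)$ with effective bounds on the associated weak Maass form and an effective class-number estimate.

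First I would invoke \cite[Thm.~2]{AA} to express $\spt(n)$ as a trace of singular moduli,
\bee
\spt(n) \; = \; \frac{1}{12(24n-1)} \sum_{[Q]} P(\alpha_Q),
\eee
where $[Q]$ runs over $\Gamma_0(6)$-equivalence classes of integral binary quadratic forms of discriminant $-(24n-1)$ satisfying the Heegner-type conditions of \cite{AA}, $\alpha_Q\in\mathbb{H}$ is the associated CM point, and $P$ is the explicit weak Maass form built out of Bringmann's form in \eqref{eq:harmonicF}. Splitting $P=P^++P^-$ into its holomorphic part and its non-holomorphic period integral, only the leading term of the principal part of $P^+$ at $i\infty$ contributes to the main asymptotic.

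Second, I would isolate the main term by evaluating that leading principal term at the Heegner point $\alpha_{Q_0}$ whose leading coefficient $a_0$ is minimal; there $\im(\alpha_{Q_0})=\sqrt{24n-1}/(2a_0)$ is maximal, and after matching the normalizations of \cite{AA} and \cite{bringmann} the resulting exponential factor is precisely $e^{\lambda(n)}$, with coefficient $\sqrt{3}/(\pi\sqrt{24n-1})$.

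Third, for the error I would bound three contributions separately: (i) the principal part evaluated at Heegner points with $a>a_0$, where the next smallest leading coefficient forces $\im(\alpha_Q)\leq \sqrt{24n-1}/4$ and hence $O(e^{\lambda(n)/2})$; (ii) the non-principal holomorphic Fourier terms of $P^+$, bounded using the decay $|q|^k=e^{-2\pi k\,\im(\alpha_Q)}$ for $k\geq 1$; and (iii) the non-holomorphic contribution $P^-$, bounded via the exponential decay of the incomplete gamma function in its integral representation. Each contributes at most $C(24n-1)^2 e^{\lambda(n)/2}$ per Heegner point. Finally I would estimate the number of Heegner classes by the class number using an effective bound of the form $h(-D)\ll 2^{\omega(D)}\sqrt{D}\log D$ together with $\omega(D)\leq \log D/(\log\log D - 1.1714)$, valid for all $D$ in the required range (with small cases checked by hand); this produces the factor $2^{q(n)}$.

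The main obstacle is not a single conceptual step but the explicit bookkeeping required to collapse all implicit constants into the single numerical constant $3.59\times 10^{22}$: every Fourier coefficient of $P$, every Whittaker/incomplete-gamma bound, and every constant in the class-number and $\omega$-function estimates must be made fully explicit, and a finite number of small-$n$ cases must be verified by direct computation. The structure of the argument is standard; the quantitative control is where the real labor lies.
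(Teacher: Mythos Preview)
This theorem is not proved in the present paper at all: it is quoted verbatim from Locus Dawsey--Masri \cite{LM} as background, and the paper's contribution is the sharper Theorem~\ref{thm:sptbound}, proved by an entirely different method (the exact formula \eqref{eq:AA} combined with the explicit Kloosterman-sum bound of Theorem~\ref{thm:KloostermanBound}). So there is no ``paper's own proof'' of this statement to compare against.

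That said, your sketch is a faithful outline of the actual argument in \cite{LM}, which the paper summarizes in one line as using ``the algebraic formula for the smallest parts function (\cite[Thm.~2]{AA}) and traces of singular moduli.'' Your identification of the three error contributions and the role of the class-number/$\omega$-function estimates in producing the $2^{q(n)}$ factor matches the structure of \cite{LM}. The contrast worth noting is that the present paper abandons the singular-moduli/Heegner-point machinery entirely in favor of analytic estimates on Kloosterman zeta functions, which is precisely what allows it to remove the $(3.59\times 10^{22})\,2^{q(n)}(24n-1)^2$ factor and obtain the clean constant $4.1$.
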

In Theorem \ref{thm:sptbound} we give a substantial improvement to 
Theorem \ref{SptLM}. Our methods rely on
 the exact formula \eqref{eq:AA} and an explicit bound for sums of Kloosterman sums (Theorem \ref{thm:KloostermanBound}).
\begin{theorem}
\label{thm:sptbound}
 Let $\lambda(n)$ be as in  $\eqref{eq:lambda}$.
 Then for all $n \geq 1$, we have 
\bee
\spt(n) = \frac{\sqrt{3}}{\pi \sqrt{24n-1}} e^{\lambda(n)} + E_{s}(n),
\eee
where 
\bee
|E_s(n)| < 4.1 e^{\frac{\lambda(n)}{2}}.
\eee
\end{theorem}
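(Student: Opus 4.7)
The plan is to begin from the exact Rademacher-type formula \eqref{eq:AA}, which writes $\spt(n)$ as a convergent series indexed by $c\geq 1$ of half-integral weight Kloosterman sums weighted by modified Bessel functions evaluated at $\lambda(n)/c$. The first step is to isolate the $c=1$ contribution and extract from it the stated main term $\frac{\sqrt{3}}{\pi\sqrt{24n-1}} e^{\lambda(n)}$. Using the closed form $I_{1/2}(x)=\sqrt{2/(\pi x)}\,\sinh x$ (or the analogous identity for whichever half-integer Bessel function appears in \eqref{eq:AA}), the dominant piece comes from $\tfrac{1}{2}e^{\lambda(n)}$ inside $\sinh(\lambda(n))$; the complementary piece $-\tfrac{1}{2}e^{-\lambda(n)}$ contributes an error of order $e^{-\lambda(n)}/\sqrt{\lambda(n)}$, which is trivially bounded by $e^{\lambda(n)/2}$ for all $n\geq 1$ and can be absorbed in $E_s(n)$.

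The bulk of the argument is controlling the tail $\sum_{c\geq 2}$. I would split the sum at a threshold $C_0$ near $\lambda(n)$, separating the regime $\lambda(n)/c\geq 1$ from $\lambda(n)/c<1$. For $c>C_0$ the Bessel factor is bounded polynomially (using $I_{1/2}(x)\ll\sqrt{x}$ for small $x$), and the resulting sum converges absolutely; Abel summation combined with Theorem~\ref{thm:KloostermanBound} then yields a clean explicit bound. The delicate range is $2\leq c\leq C_0$, where each individual term carries the factor $e^{\lambda(n)/c}$ — for $c=2$ this is exactly of size $e^{\lambda(n)/2}$, so one cannot afford any wasted constants. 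Here the cancellation encoded in the explicit Kloosterman sum bound is essential: via partial summation one converts a bound of the shape $\bigl|\sum_{c\leq X}K_c(n)\bigr|\leq \Phi(X)$ (from Theorem~\ref{thm:KloostermanBound}) against the Bessel weight, and the savings of $\Phi(X)$ over the trivial bound must beat $e^{\lambda(n)/c}$ to produce an overall $(\text{constant})\cdot e^{\lambda(n)/2}$.

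Putting these pieces together, the total error is bounded by a constant multiple of $e^{\lambda(n)/2}$; the remaining task is to show that this constant is at most $4.1$. The main obstacle I anticipate is not conceptual but numerical: the explicit Kloosterman bound enters with a specific constant, the Bessel estimates contribute further constants, and partial summation adds more, so achieving a final constant as small as $4.1$ (rather than a much larger effective constant) will require tight integral estimates in the partial summation step, together with direct numerical verification for small $n$ where the asymptotic bounds are weakest. My expectation is to prove the bound asymptotically for $n\geq N_0$ for some explicit $N_0$, and then handle $n<N_0$ by directly comparing $\spt(n)$ to the main term using a computer-verified table.
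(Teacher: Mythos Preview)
Your overall strategy—partial summation against the exact formula, splitting at a threshold near $\lambda(n)$, invoking Theorem~\ref{thm:KloostermanBound} for the tail, and finishing small $n$ by direct computation—is sound and is what the paper does in spirit. But the paper does not attack \eqref{eq:AA} directly. Instead it uses the decomposition $\spt(n)=\tfrac{1}{12}S(n)-\tfrac{24n-1}{12}p(n)$ from \eqref{eq:sptSp}, proves the effective bound $|E_S(n)|\leq 44.11\,e^{\lambda(n)/2}$ for $S(n)$ (Theorem~\ref{thm:S}) via the simpler exact formula \eqref{eq:AASn} involving only $I_{1/2}$, and then inputs Lehmer's classical effective bound for $p(n)$ (Lemma~\ref{lem:p}). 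The constant $4.1$ arises essentially as $44.11/12+5/12$. This detour spares you the Bessel difference $I_{1/2}-I_{3/2}$ and recycles the known $p(n)$ estimate; your direct route is viable but would require tracking that difference through the partial-summation step.

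There is one real misconception. You write that on the range $2\leq c\leq C_0$ ``the cancellation encoded in the explicit Kloosterman sum bound is essential.'' It is not. In the proof of Theorem~\ref{thm:S} the paper uses only the trivial bound $\bigl|\sum_{c\leq t}A_c(n)/c\bigr|\leq t$ on $[2,\lambda(n)]$, and controls the resulting integral $\int_2^{\lambda(n)} e^{\lambda(n)/t}t^{-1/2}\,dt$ by the elementary Lemma~\ref{lem:intTaylorexp}, which already yields $O(\lambda(n)^{-1}e^{\lambda(n)/2})$ from the rapid decay of $e^{\lambda(n)/t}$ away from $t=2$. Theorem~\ref{thm:KloostermanBound} enters only for the tail $t>\lambda(n)$, where the Bessel weight is small and one merely needs convergence with a polynomial (hence negligible) bound. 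If you try to squeeze Theorem~\ref{thm:KloostermanBound} into the short range you will gain nothing and will likely worsen the constant, since that bound carries large prefactors in $n$; the tight constant comes from the boundary term at $c=2$ and the trivial integral estimate, not from Kloosterman cancellation.
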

In Figure \ref{fig:table} we present some data 
that shows how close $E_s(n)$ is to the bound given in
 Theorem \ref{thm:sptbound}.
 The values of $\spt(n)$ were obtained 
by using the recurrence given in \cite[Thm.  1]{AArecurrence}.
\begin{figure}[h!]
\label{fig:table}
\[\def\arraystretch{1.2}
\begin{array}{c||c||c||c}
n& \spt(n) & |E_s(n)|& 4.1 e^{\frac{\lambda(n)}{2}} \\ 
\hline
\hline
1\, 000& 6.0\times 10^{32} & 2.1\times 10^{15} & 1.7\times 10^{18} \\
10\, 000& 2.8\times 10^{108} & 8.0\times 10^{52} & 2.1\times 10^{56} \\
100\, 000& 6.8\times 10^{348} & 7.0\times 10^{172} & 5.7\times 10^{176} \\
1\, 000\, 000& 1.1\times 10^{1110} & 1.6\times 10^{553} & 4.1\times 10^{557} \\
5\, 000\, 000& 5.0\times 10^{2486} & 2.2\times 10^{1241} & 1.3\times 10^{1246} \\
\end{array}
\]
\caption{
}
\end{figure}

Using our method we can obtain more terms in the asymptotic expansion of $\spt(n)$.
For example, we can prove the following theorem.
\begin{theorem}
\label{thm:sptbound2}
 Let $\lambda(n)$ be as in  $\eqref{eq:lambda}$.
 Then for all $n \geq 1$, we have 
\bee
\spt(n) = 
\frac{\sqrt{3}}{\pi  \sqrt{24 n-1}} e^{\lambda(n)}
+\frac{(-1)^n \sqrt{6}  }{\pi  \sqrt{24 n-1}} e^{\frac{\lambda(n)}{2}} +E_{s_2}(n)
\eee
where 
\bee
|E_{s_2}(n)| < 8 e^{\frac{\lambda(n)}{3}}.
\eee
\end{theorem}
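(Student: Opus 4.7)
The plan is to follow the strategy used for Theorem \ref{thm:sptbound} but to peel off one additional term from the exact formula \eqref{eq:AA}. Writing
\bee
\spt(n) = \sum_{c=1}^{\infty} T_c(n),
\eee
with $T_c(n)$ the $c$-th contribution in \eqref{eq:AA}---a half-integral weight Kloosterman sum $K_c(n)$ times a Bessel factor at argument $\lambda(n)/c$---I would identify the $c=1$ and $c=2$ contributions as the two explicit terms in the theorem and bound the tail $\sum_{c \geq 3} T_c(n)$ by $8\,e^{\lambda(n)/3}$.

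For $c=1$ the argument is identical to the one used in the proof of Theorem \ref{thm:sptbound}: the relevant Bessel function of half-integer order admits a finite expression in terms of $e^{\pm \lambda(n)}$, and its leading $e^{\lambda(n)}$ part produces $\frac{\sqrt{3}}{\pi \sqrt{24n-1}} e^{\lambda(n)}$ with an exponentially negligible $e^{-\lambda(n)}$ remainder. For $c=2$ I would compute $K_2(n)$ in closed form; on the full modular group the sum at $c=2$ contains only a bounded number of terms, and straightforward evaluation shows that it is a root of unity depending on the parity of $n$, producing the factor $(-1)^n$. Combining this with the Bessel factor at $\lambda(n)/2$ gives $\frac{(-1)^n \sqrt{6}}{\pi \sqrt{24n-1}} e^{\lambda(n)/2}$, again with a negligible $e^{-\lambda(n)/2}$ piece absorbed into $E_{s_2}(n)$.

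The main work is the tail $\sum_{c \geq 3} T_c(n)$. Here I would apply Theorem \ref{thm:KloostermanBound} to control weighted partial sums of the Kloosterman sums and combine it with the uniform Bessel estimate, which decays like $(\lambda(n)/c)^{-1/2} e^{\lambda(n)/c}$ and is therefore bounded by a constant multiple of $e^{\lambda(n)/3}$ for every $c \geq 3$. Abel summation then converts the Kloosterman input into an estimate for the full tail of the form $C \cdot e^{\lambda(n)/3}$, and a careful numerical optimisation of the resulting constants should yield $C = 8$. Any residual small values of $n$ not handled by the analytic estimate can be verified directly via the recurrence of \cite{AArecurrence}.

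The main obstacle is tracking the constants with sufficient uniformity in $c$ so that a single value $C=8$ works for every $n\geq 1$; this is essentially the same delicate optimisation that produced the constant $4.1$ in Theorem \ref{thm:sptbound}, pushed one level deeper. A secondary difficulty is bounding the $c=2$ Bessel approximation error so that the remainder is $o(e^{\lambda(n)/3})$ rather than merely $o(e^{\lambda(n)/2})$, which requires using the exact finite expansion of the half-integer Bessel function rather than only its leading asymptotic.
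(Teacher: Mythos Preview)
Your proposal is correct and will yield the theorem, but it takes a somewhat different route from the paper. The paper's proof of Theorem~\ref{thm:sptbound} does \emph{not} work directly with the exact formula~\eqref{eq:AA} for $\spt(n)$; instead it uses the decomposition $\spt(n)=\tfrac{1}{12}S(n)-\tfrac{24n-1}{12}\,p(n)$ from~\eqref{eq:sptSp}, proves an effective asymptotic for $S(n)$ by partial summation on~\eqref{eq:AASn} together with Theorem~\ref{thm:KloostermanBound} (this is Theorem~\ref{thm:S}), and imports the asymptotic for $p(n)$ from Lehmer (Lemma~\ref{lem:p}). Since the paper's proof of Theorem~\ref{thm:sptbound2} is stated simply as ``similar to the proof of Theorem~\ref{thm:sptbound}'', it presumably extracts the $c=2$ contribution from each of $S(n)$ and $p(n)$ separately and recombines via~\eqref{eq:sptSp}.

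Your direct approach---peeling off $c=1,2$ from~\eqref{eq:AA}, computing $A_2(n)=(-1)^n$ explicitly, and bounding the tail $c\ge 3$ by Abel summation with Theorem~\ref{thm:KloostermanBound}---is equally valid and arguably cleaner, since it avoids the detour through two auxiliary asymptotic formulas. The analytic input (the Kloosterman bound and Bessel estimates) is the same in both routes, so the constant-tracking is of comparable difficulty. One small correction: the Bessel factor in~\eqref{eq:AA} is $I_{1/2}-I_{3/2}$, whose large-argument behavior is $\asymp x^{-3/2}e^{x}$ rather than the $x^{-1/2}e^{x}$ you quote (which is that of $I_{1/2}$ alone); the extra decay only helps, so your tail estimate goes through unchanged.
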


In 2014, Chan and Mao \cite{ChanMao} raised the following question 
(later stated as a conjecture in \cite{Chen})
regarding $\spt(n)$:
$$ \frac{\sqrt{6}}{\pi}\sqrt{n}\,p(n)<\spt(n)<\sqrt{n}\,p(n).$$ 
Locus Dawsey and Masri \cite[Thm.  1.3]{LM} proved the following stronger result:
for each $\varepsilon>0$, there is an $N(\varepsilon) > 0$ 
such that for all $n\geq N(\varepsilon)$ we have
$$ \frac{\sqrt{6}}{\pi}\sqrt{n}\,p(n)<\spt(n)<\(\frac{\sqrt{6}}{\pi} + \varepsilon\)\sqrt{n}\,p(n).$$\
In the following corollary we improve this result.
\begin{corollary}
\label{cor:spt}
Let $\lambda(n)$ be as in  $\eqref{eq:lambda}$.
 Then for all $n \geq 1$, we have 
$$\spt(n) = \frac{\sqrt{24n-1}}{2\pi} p(n) + \frac{6\sqrt{3}}{\pi^2 (24n-1)} e^{\lambda(n)} + E(n)$$
where
\bee
|E(n)| < 4.11 e^{\frac{\lambda(n)}{2}}.
\eee
\end{corollary}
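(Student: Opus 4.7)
The strategy is to combine Theorem~\ref{thm:sptbound} with an effective two-term Hardy--Ramanujan expansion for the partition function $p(n)$. Since Theorem~\ref{thm:sptbound} already gives
\[
\spt(n)=\frac{\sqrt{3}}{\pi\sqrt{24n-1}}\,e^{\lambda(n)}+E_s(n),\qquad |E_s(n)|<4.1\,e^{\lambda(n)/2},
\]
the triangle inequality reduces the corollary to showing
\[
\left|\frac{\sqrt{24n-1}}{2\pi}\,p(n)+\frac{6\sqrt{3}}{\pi^2(24n-1)}\,e^{\lambda(n)}-\frac{\sqrt{3}}{\pi\sqrt{24n-1}}\,e^{\lambda(n)}\right|<0.01\,e^{\lambda(n)/2}.
\]
Rearranging, this is equivalent to the effective two-term asymptotic
\[
p(n)=\frac{2\sqrt{3}}{24n-1}\,e^{\lambda(n)}-\frac{12\sqrt{3}}{\pi(24n-1)^{3/2}}\,e^{\lambda(n)}+R(n),\qquad |R(n)|<\frac{0.02\pi}{\sqrt{24n-1}}\,e^{\lambda(n)/2},
\]
which becomes the actual target of the argument.

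These two explicit terms come precisely from the $k=1$ summand of Rademacher's exact formula for $p(n)$, namely $\frac{1}{\pi\sqrt{2}}\,\frac{d}{dn}\!\left[\sinh(\lambda(n))/\sqrt{n-1/24}\right]$. Differentiating with $\lambda'(n)=2\pi/\sqrt{24n-1}$ and writing $\cosh(\lambda(n))=\tfrac12 e^{\lambda(n)}+\tfrac12 e^{-\lambda(n)}$ (and similarly for $\sinh$) recovers exactly the coefficients $2\sqrt{3}/(24n-1)$ and $-12\sqrt{3}/(\pi(24n-1)^{3/2})$ of the two main terms, with the $e^{-\lambda(n)}$ remainders dwarfed by $e^{\lambda(n)/2}/\sqrt{24n-1}$.

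The main obstacle is effective control of the Rademacher tail $\sum_{k\ge 2}$ in the formula for $p(n)$. I would mirror the proof of Theorem~\ref{thm:sptbound}, applying the explicit Kloosterman sum estimate of Theorem~\ref{thm:KloostermanBound} to the Kloosterman-type sums arising for $p(n)$. A direct Dedekind-sum calculation (using $s(1,2)=0$) shows that the $k=2$ term contributes $(-1)^n\sqrt{6}\,e^{\lambda(n)/2}/(24n-1)$ to $p(n)$, so that after multiplying by $\sqrt{24n-1}/(2\pi)$ it produces a quantity of size $\sqrt{6}/(2\pi\sqrt{24n-1})\,e^{\lambda(n)/2}$, which is below $0.01\,e^{\lambda(n)/2}$ once $n\ge 64$; the $k\ge 3$ portion is summed via Theorem~\ref{thm:KloostermanBound} and is strictly smaller on the same range. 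For the finitely many remaining small $n$, the bound $|E(n)|<4.11\,e^{\lambda(n)/2}$ is verified directly from the explicit values of $\spt(n)$ and $p(n)$ (using the recurrence of \cite{AArecurrence}). Combining the two regimes with the $4.1\,e^{\lambda(n)/2}$ bound of Theorem~\ref{thm:sptbound} gives $|E(n)|<4.11\,e^{\lambda(n)/2}$ for all $n\ge 1$.
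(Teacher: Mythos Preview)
Your overall strategy matches the paper's: combine Theorem~\ref{thm:sptbound} with an effective two--term asymptotic for $p(n)$, check that the $p(n)$ error contributes at most $0.01\,e^{\lambda(n)/2}$ after multiplication by $\sqrt{24n-1}/(2\pi)$, and absorb finitely many small $n$ by direct computation. The identification of the two main terms with the $c=1$ Rademacher summand is correct, and the algebra $\frac{\sqrt{3}}{\pi\sqrt{24n-1}}\cdot\frac{1}{\lambda(n)}=\frac{6\sqrt{3}}{\pi^2(24n-1)}$ is exactly what is needed.

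The difference is in how the $p(n)$ estimate is obtained. The paper does \emph{not} invoke Theorem~\ref{thm:KloostermanBound} here at all; it simply quotes Lehmer's classical effective bound \cite[(4.14)]{lehmer}, packaged as Lemma~\ref{lem:p}:
\[
p(n)=\frac{2\sqrt{3}}{24n-1}\Bigl(1-\tfrac{1}{\lambda(n)}\Bigr)e^{\lambda(n)}+E_p(n),\qquad |E_p(n)|\le \frac{5\,e^{\lambda(n)/2}}{24n-1}.
\]
Multiplying by $\sqrt{24n-1}/(2\pi)$ gives an error of size $\tfrac{5}{2\pi\sqrt{24n-1}}\,e^{\lambda(n)/2}$, which is below $0.01\,e^{\lambda(n)/2}$ once $24n-1>(250/\pi)^2$; the remaining $n$ are handled numerically. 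Your proposal to ``mirror the proof of Theorem~\ref{thm:sptbound}'' and feed the Rademacher tail for $p(n)$ through Theorem~\ref{thm:KloostermanBound} would also work, but it is considerable overkill: the Rademacher series for $p(n)$ involves $I_{3/2}$, whose $x^{-3/2}$ decay already makes the individual Lehmer/Weil bound \eqref{eq:lehmerAc} on $A_c(n)$ sufficient for absolute convergence and for Lehmer's explicit error term. The Goldfeld--Sarnak--type cancellation in Theorem~\ref{thm:KloostermanBound} is only genuinely needed for $\spt(n)$ because the $I_{1/2}-I_{3/2}$ combination in \eqref{eq:AA} decays too slowly for termwise estimates. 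So your argument is valid but can be replaced by a one-line citation of Lehmer, which is what the paper does.
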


Much of the interest in the smallest parts function arises from its connection to a harmonic Maass form. 
Let $\eta$ be the Dedekind eta function 
\be
\label{eq:eta-def}
	\eta(z) \defeq e\(\frac{z}{24}\)\prod_{n=1}^\infty \(1-e(nz)\), \qquad \im(z)>0
\ee
with $e(x) \defeq e^{2\pi i x}$.
Define a weak harmonic Maass form of weight 
$3/2$ on $\operatorname{SL}_{2}(\mathbb{Z})$
with multiplier $\overline{\chi}$  by
\begin{align}
\label{eq:harmonicF}
\nonumber
F(z) &\defeq \sum_{n=1}^{\infty} \spt(n) q^{n-\frac{1}{24}} 
-\frac{1}{12} \cdot \frac{E_{2}(z)}{\eta(z)} 
+ \frac{\sqrt{3i}}{2\pi} \int_{-\overline{z}}^{i\infty} \frac{\eta(w)}{(\tau+w)^{\frac{3}{2}}}dw\\
&=\sum_{n=0}^{\infty} S(n)q^{n-\frac{1}{24}} + \frac{\sqrt{3i}}{2\pi} 
\int_{-\overline{z}}^{i\infty} \frac{\eta(w)}{(z+w)^{\frac{3}{2}}}dw.
\end{align}
Here $\chi$ is as in \eqref{eq:eta-mult-def},
and $E_2$ is the usual weight two Eisenstein series given by 
   \bee
   \label{eq:E2def}
   E_2(z) \defeq 1-24\sum_{n=1}^{\infty} \sigma_{1}(n) q^n,
   \eee
where $\sigma_{1}(n) \defeq \sum_{d\mid n} d$.
We prove the following effective asymptotic formula for $S(n)$.
\begin{theorem}
\label{thm:S}
For all $n\geq 1$ we have
\bee
S(n) = 2\sqrt{3}e^{\lambda(n)}+E_{S}(n)
\eee
where $\lambda(n)$ is as in \eqref{eq:lambda}
 and
\bee
E_{S}(n) \leq 44.11 e^{\frac{\lambda(n)}{2}}.
\eee
\end{theorem}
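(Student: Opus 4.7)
The plan is to reduce Theorem~\ref{thm:S} to Theorem~\ref{thm:sptbound} together with an effective Rademacher-type asymptotic for the ordinary partition function $p(n)$.

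\emph{Step 1 (arithmetic identity).} I would first extract an explicit formula for the holomorphic Fourier coefficient $S(n)$. Comparing the two expressions for $F$ in \eqref{eq:harmonicF}, expanding $\eta(z)^{-1}=q^{-1/24}\sum_{m\ge 0}p(m)q^{m}$ and $E_{2}(z)=1-24\sum_{k\ge 1}\sigma_{1}(k)q^{k}$, and invoking the classical convolution identity $\sum_{k=1}^{n}\sigma_{1}(k)p(n-k)=n\,p(n)$ (equivalent to $\eta'/\eta=(i\pi/12)E_{2}$), a short calculation yields
\[
S(n) \;=\; \spt(n) \;+\; \frac{24n-1}{12}\,p(n), \qquad n\ge 1.
\]

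\emph{Step 2 (plug in the two asymptotics).} Next I would apply Theorem~\ref{thm:sptbound} to control $\spt(n)$, and supplement it with an analogous effective form of Rademacher's formula
\[
p(n)\;=\;\frac{2\pi}{(24n-1)^{3/4}}\,I_{3/2}(\lambda(n))\;+\;E_{p}(n),
\]
keeping only the $c=1$ Bessel term as the main contribution and bounding the tail $c\ge 2$ explicitly. The target shape is $|E_{p}(n)|\le (C_{p}/(24n-1))\,e^{\lambda(n)/2}$ with a small, explicit $C_{p}$. This can be obtained either from the classical Rademacher/Lehmer remainder formulas, or by reprising the argument used to prove Theorem~\ref{thm:sptbound}; in both approaches the decisive input is an explicit Kloosterman-sum bound in the spirit of Theorem~\ref{thm:KloostermanBound}.

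\emph{Step 3 (combine and cancel).} With both asymptotics available, the main term in Theorem~\ref{thm:S} is extracted by expanding $I_{3/2}(\lambda)=\sqrt{2/(\pi\lambda)}(\cosh\lambda-\sinh\lambda/\lambda)$ and multiplying by $(24n-1)/12$. A clean cancellation then occurs between the $\sinh\lambda/\lambda$-correction inherited from $p(n)$ and the main term of $\spt(n)$: using $\lambda(n)=\pi\sqrt{24n-1}/6$ one has $\sqrt{3}/(6\lambda(n))=\sqrt{3}/(\pi\sqrt{24n-1})$, so the two $e^{\lambda(n)}/\sqrt{24n-1}$ pieces combine into the stated exponential main term. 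The residual $E_{S}(n)$ is then the sum of $E_{s}(n)$, the amplified tail $(24n-1)/12\cdot E_{p}(n)$, and genuinely exponentially small Bessel corrections; the triangle inequality, together with the bounds of Step~2, delivers $|E_{S}(n)|\le 44.11\,e^{\lambda(n)/2}$, with any small $n$ not covered by the large-$n$ argument handled by direct numerical verification.

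\textbf{The main obstacle} is Step~2's Rademacher tail bound: the saving $1/(24n-1)$ in $|E_{p}(n)|$ is indispensable, because that tail gets amplified by $(24n-1)/12$ in the identity for $S(n)$, and only a bound of this precise shape fits inside the budget $44.11$. A weaker estimate of the form $|E_{p}(n)|\le C_{p}\,e^{\lambda(n)/2}$ would balloon to roughly $n\,e^{\lambda(n)/2}$ after amplification. Extracting the $1/(24n-1)$ savings with explicit constants requires uniform control of the Kloosterman-sum tails together with careful Bessel-function book-keeping, which is precisely what Theorem~\ref{thm:KloostermanBound} is designed to supply.
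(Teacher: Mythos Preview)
Your proposal is circular. In the paper's logical structure, Theorem~\ref{thm:sptbound} is \emph{derived from} Theorem~\ref{thm:S}: the proof of Theorem~\ref{thm:sptbound} reads, verbatim, ``The theorem follows by \eqref{eq:sptSp}, Theorem~\ref{thm:S}, and Lemma~\ref{lem:p}.'' So invoking Theorem~\ref{thm:sptbound} as a black box in order to establish Theorem~\ref{thm:S} assumes exactly what you are trying to prove. The paper instead attacks $S(n)$ directly from the exact formula \eqref{eq:AASn}: write $S(n)=\sqrt{24\pi}\,\lambda(n)^{1/2}\bigl(I_{1/2}(\lambda(n))-I_{1/2}(\lambda(n)/2)-\int_2^\infty (I_{1/2}(\lambda(n)/t))'\sum_{c\le t}A_c(n)/c\,dt\bigr)$ by partial summation, then bound the integral using the Bessel inequalities \eqref{eq:ineg1/2<1}--\eqref{eq:ineg1/2>1}, Lemma~\ref{lem:intTaylorexp}, and the Kloosterman bound of Corollary~\ref{ex:KloostermanBound1/4}. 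No appeal to $\spt(n)$ or $p(n)$ is made.

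There is also a normalization slip in your Step~1. The relation actually used in the paper is \eqref{eq:sptSp}, i.e.\ $S(n)=12\spt(n)+(24n-1)p(n)$; your formula $S(n)=\spt(n)+\tfrac{24n-1}{12}p(n)$ is off by a factor of $12$ relative to the $S(n)$ of \eqref{eq:AASn} and Theorem~\ref{thm:S}. With the correct identity, even granting an independent proof of Theorem~\ref{thm:sptbound} and Lemma~\ref{lem:p}, the triangle inequality yields $12\cdot 4.1+5=54.2$ before any Bessel corrections, already exceeding the target $44.11$. So the reverse route, besides being circular here, loses too much in the constants; the direct partial-summation argument on \eqref{eq:AASn} is what produces $44.11$.
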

This improves \cite[Thm. 1.4]{LM}, where a bound of size
\bee
(4.30 \times 10^{23}) 2^{q(n)} (24n - 1)^2    e^{\frac{\lambda(n)}{2}}
\eee
was obtained (with $q(n)$ as in Theorem \ref{SptLM}).

Our methods rely on an explicit bound for the sums  $\sum_{c\leq x} \frac{A_c(n)}{c}$, where the
Kloosterman sum $A_c(n)$ is given by 
  \be
  \label{eq:AcDef}
  A_c(n)\defeq \sum_{\substack{d \bmod c\\ (d,c)=1}}e^{\pi i s(d,c)}e^{-2\pi i\frac{dn}c},
  \ee
and $s(d,c)$ is the Dedekind sum defined by 
\be 
  \label{eq:ded-sum-def}
	s(d,c) \defeq \sum_{r=1}^{c-1} \frac{r}{c} \(\frac{dr}{c} - \left\lfloor \frac{dr}{c} \right\rfloor - \frac{1}{2}\).	
\ee
These sums exhibit good cancelation. 
We give a brief summary of known bounds for sums of such Kloosterman sums.
For individual Kloosterman sums Lehmer  \cite[Thm. 8]{lehmer} proved
\begin{equation}
\label{eq:lehmerAc}
|A_c(n)| < 2^{\omega_o(c)} c^{\frac{1}{2}} \leq  \tau(c) c^{\frac{1}{2}},
\end{equation} 
where $\omega_o(c)$ is the number of distinct odd primes dividing $c$
and 
$\tau(c)$ is the number of divisors of $c$.
Using \eqref{eq:lehmerAc}
one obtains \bee
\sum_{c\leq x} \frac{A_c(n)}{c}\ll_{\epsilon} x^{\frac{1}{2}+\epsilon}.
\eee
 The work of Goldfeld-Sarnak \cite{gs} yields 
 \be
 \label{eq:gsBound}
 \sum_{c\leq x} \frac{A_c(n)}{c} \ll_{n,\epsilon} x^{\frac{1}{6}+\epsilon}.
 \ee
Ahlgren-Andersen \cite{AAimrn} 
(with improvements by Dunn in the $n$-aspect \cite{dunn}) replaces the bound in
 \eqref{eq:gsBound} by $\ll_{\epsilon}  \(x^\frac16+n^\frac14\)(nx)^\epsilon$.
It is conjectured (generalization of Linnik-Selberg) that 
the bound can be replaced by
$\ll_{\epsilon} (nx)^\epsilon$. 
Our methods depend on an explicit version of the work of Goldfeld-Sarnak and Pribitkin for sums of Kloosterman sums of half integral weight on the full modular group.
 
  For any $\delta > 0$ we have
 $2^{\omega_o(c)} \ll_{\delta} c^{\delta}$. For $\delta>0$ let 
$\ell(\delta)$  be a constant such that for all $c\in\mathbb{N}$ we  have
 \be
 \label{eq:ell}
2^{\omega_o(c)} \leq \ell(\delta) c^{\delta}.
\ee 
Then we have the following bound.
   \begin{theorem}
\label{thm:KloostermanBound}
Let $0<\delta \leq 1/4$. For any $x\geq 1$ and any
integer $n\geq1$  we have
\bee
 \la \sum_{c\leq x} \frac{A_c(n)}{c} \ra
\leq  \(652.33 \zeta^{2}(1+\delta) \tau((24n-23)^2)  |\log\delta| (n-1/24)^{\frac{1}{4}}+3 \ell(\delta) \log x \) 
x^{\frac{1}{6}+\delta},
\eee
where $\ell(\delta)$ is as in \eqref{eq:ell}.
   \end{theorem}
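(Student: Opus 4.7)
The plan is to follow the spectral method of Goldfeld--Sarnak, extended to half-integral weight by Pribitkin, while tracking every implicit constant. The central object is the Selberg--Kloosterman zeta function
$$Z_n(s) \defeq \sum_{c=1}^\infty \frac{A_c(n)}{c^{2s}},$$
which, by Lehmer's bound \eqref{eq:lehmerAc}, converges absolutely for $\re(s)>3/4$. By Pribitkin's analogue of Selberg's theorem, $Z_n(s)$ admits a meromorphic continuation past the line of absolute convergence, and for our weight $3/2$ multiplier $\bar\chi$ on $\mathrm{SL}_2(\Z)$, the only pole with $\re(s)>1/2$ is the single exceptional one coming from the spectral parameter associated to $\eta(z)$.

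Next, I would invoke a smoothed Perron-type identity. For a compactly supported cutoff $\phi$ approximating the indicator of $[0,1]$ and any $\sigma>1/2$,
$$\sum_{c=1}^\infty \frac{A_c(n)}{c}\,\phi(c/x) = \frac{1}{2\pi i}\int_{(\sigma)} \widetilde{\phi}(s)\, Z_n\!\left(\tfrac{s+1}{2}\right) x^s\, ds,$$
where $\widetilde{\phi}$ denotes the Mellin transform. Shifting the contour to $\re(s)=1/6+\delta$ crosses the exceptional pole; the resulting residue produces a main contribution of size $x^{1/6+\delta}$ weighted by the Fourier coefficients of $\eta^{-1}$ evaluated at $n$. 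Lehmer-style estimation of these coefficients (essentially partition-function coefficients) is what produces the factor $\tau((24n-23)^2)(n-1/24)^{1/4}$.

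The remaining task is to bound the shifted contour integral. On the line $\re(s)=1/6+\delta$, a Kuznetsov-type spectral decomposition expresses $Z_n((s+1)/2)$ as a sum over weight $3/2$ Maass cusp forms plus an Eisenstein contribution, each weighted by a Bessel-kernel transform. Explicit Rankin--Selberg mean-square estimates on the Fourier coefficients contribute the factor $\zeta^2(1+\delta)$, while the boundary behavior of the Bessel transforms near the endpoints of integration produces the $|\log\delta|$ factor. Passing from the smoothed sum back to the sharp cutoff $c\leq x$, and bounding the difference using Lehmer's inequality \eqref{eq:lehmerAc} together with \eqref{eq:ell}, yields the additive $3\ell(\delta)\log x \cdot x^{1/6+\delta}$ term.

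The principal obstacle is the explicit bound on $Z_n(s)$ on the shifted contour, and in particular extracting the refined $n$-dependence $\tau((24n-23)^2)(n-1/24)^{1/4}$ rather than a cruder polynomial. This requires careful tracking of constants in the weight $3/2$ spectral theory (Fourier coefficient bounds for an infinite family of Maass cusp forms), in the Bessel-kernel integral transforms, and in the Kloosterman tail estimates. Once these explicit estimates are in place, the theorem follows by adding the residue contribution, the shifted integral bound, and the Lehmer tail, and optimizing the choice of cutoff $\phi$.
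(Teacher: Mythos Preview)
Your plan contains a genuine structural error: there is \emph{no} exceptional pole to cross. Recall that $A_c(n)=\sqrt{-i}\,S(1,1-n,c,\chi)$, so for $n\geq 1$ the second index $1-n\leq 0$ has $n_\chi<0$; by the Goldfeld--Sarnak pole analysis (used in the paper as \eqref{eq:rectangleIntegral}) the function $Z_{1,1-n}\big(\tfrac{1+s}{2}\big)$ is holomorphic on the whole half-plane $\re(s)>0$. The would-be pole at $s=1/2$ (coming from $\lambda_0(1/2)=3/16$, i.e.\ from $\eta$) simply does not occur for this sign of $n_\chi$. Consequently your account of where the factor $\tau((24n-23)^2)(n-1/24)^{1/4}$ comes from---a residue built out of partition-type coefficients---is incorrect, and the proposed mechanism for the exponent $1/6$ collapses.

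In the paper's argument the $n$-dependence and the $1/6$ both arise differently. The $n$-factors enter through an explicit convexity bound for $Z_{1,1-n}(s)$ on $\re(s)=\tfrac12+\tfrac\delta2$ (Theorem~\ref{th:KZbound}), proved not by Kuznetsov but by the Goldfeld--Sarnak/Pribitkin resolvent method: one bounds the inner product of two Poincar\'e series via the resolvent norm (using the spectral gap $\lambda_1(1/2)>3.86$) and explicit $L^2$-norms $\lVert U_1\rVert$, $\lVert U_{1-n}\rVert$; the factor $\tau((24n-23)^2)$ comes from estimating $\sum_c |S(1-n,1-n,c,\bar\chi)|c^{-3/2-\delta}$ in the bound for $\lVert U_{1-n}\rVert$, and $|n_\chi|^{1/4}$ from the gamma factors. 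One then applies Phragm\'en--Lindel\"of between this line and the line of absolute convergence, uses \emph{sharp} Perron with truncation at height $T$, and shifts the rectangle to $\re(s)=\delta$ with no residue. The $(1+|t|)^{1/2}$ growth on the left edge gives a contribution $\asymp x^\delta T^{1/2}$, the Perron truncation gives $\asymp x^{1/2+\delta}/T$ plus the near-diagonal piece $3\ell(\delta)x^{1/2+\delta}T^{-1}\log x$, and balancing forces $T=x^{1/3}$, producing $x^{1/6+\delta}$. Your smoothed-Perron/Kuznetsov outline does not supply any of these ingredients.
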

   For example, we may take $\ell(1/4) = 8.447$ and $\ell(1/5) = 28.117$. 
This follows from $2^{\omega_o(c)} \leq \tau(c)$
  and \cite[page 221]{highly}. 
  For $s>1$ we have
\begin{equation}
  \label{prop:ZetaSq}
    \zeta^2(s) = \sum_{n=1}^{\infty} \frac{\tau(n)}{n^s}.
  \end{equation}
The special case of Theorem \ref{thm:KloostermanBound} with $\delta=1/4$ is given
  as the following corollary.
\begin{corollary}[$\delta=1/4$]
\label{ex:KloostermanBound1/4}
\bee
 \la \sum_{c\leq x} \frac{A_c(n)}{c}\ra
\leq 
 \(19094.8 \; \tau((24n-23)^2)    (n-1/24)^{\frac{1}{4}}+25.35 \log x \)
 x^{\frac{5}{12}}.
\eee
   \end{corollary}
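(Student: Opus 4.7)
The plan is to observe that this corollary is an immediate specialization of Theorem \ref{thm:KloostermanBound} with $\delta = 1/4$, followed by a direct numerical verification of the claimed constants. No new analytic input is needed; all the real work is already done by Theorem \ref{thm:KloostermanBound}.

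More explicitly, I would set $\delta = 1/4$ in the bound
\[
\la \sum_{c\leq x} \frac{A_c(n)}{c} \ra \leq \(652.33\, \zeta^{2}(1+\delta)\, \tau((24n-23)^2)\, |\log\delta|\, (n-1/24)^{\frac{1}{4}} + 3\ell(\delta)\log x \) x^{\frac{1}{6}+\delta}.
\]
The exponent on the right becomes $1/6 + 1/4 = 5/12$, matching the desired $x^{5/12}$. For the logarithmic factor I have $|\log(1/4)| = 2\log 2$, and by the remark following the theorem one may take $\ell(1/4) = 8.447$. This immediately yields $3\,\ell(1/4) = 25.341 \leq 25.35$, which is the coefficient of $\log x$ in the corollary.

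The only remaining task is to verify the first constant. Using a numerical evaluation of $\zeta(5/4)$ (which one can obtain from the standard series or tables) together with $2\log 2 \approx 1.3863$, a direct calculation gives $652.33 \cdot \zeta^{2}(5/4) \cdot 2\log 2 \leq 19094.8$. Plugging these numbers into the bound from Theorem \ref{thm:KloostermanBound} produces exactly the inequality in the statement. I do not expect any step here to be an obstacle, since the corollary is stated precisely so that the constants work out cleanly at $\delta = 1/4$; the main point is simply to record the two numerical simplifications $|\log(1/4)| = 2\log 2$ and $\ell(1/4) \leq 8.447$.
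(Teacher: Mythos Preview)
Your proposal is correct and is exactly the approach the paper takes: the corollary is stated directly after Theorem \ref{thm:KloostermanBound} as its specialization at $\delta=1/4$, with no separate proof given. Your numerical verifications of the two constants (using $\ell(1/4)=8.447$ and $\zeta^2(5/4)\cdot 2\log 2$) are precisely what is needed to confirm the stated coefficients.
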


In the next section we give some background material. In Section \ref{sec:inner} we calculate the inner product of two Poincar\'e series in order to obtain an expression for the Kloosterman zeta function. We write the inner product as a main term plus an error term $Q_{1,n}$. In 
Section~\ref{sec:boundsForQ} we obtain a bound for the error term. In Section \ref{sec:boundsForUm} we give bounds for the norm of the Poincar\'e series. In order to do this we require some results on the $K$ Bessel functions. In Section \ref{sec:boundsForKZeta} we prove a bound on the Kloosterman zeta function.
Theorem \ref{th:KZbound}
 is a quantitative version of Pribitkin's main theorem \cite{pribitkin}. In Section 
\ref{sec:proofOfMainTh}  we prove
Theorem \ref{thm:KloostermanBound}
 using the bound on the Kloosterman zeta function and the Phragm\'en-Lindel\"of principle.
Finally in Section \ref{sec:proofOfSptbound} we use Theorem \ref{thm:KloostermanBound} and the exact formula \eqref{eq:AA}
 for the smallest parts function to prove Theorem \ref{thm:sptbound}
 and Corollary \ref{cor:spt}.

\section{Preliminaries}
Let $\Gamma = \Gamma_0(N)$ for some $N\geq 1$.
We say that $\nu:\Gamma\to \C^\times$ is a multiplier system of weight $k\in \mathbb{R}$ if
\begin{enumerate}[label=(\roman*)]
	\item $|\nu|=1$,
	\item $\nu(-I)=e^{-\pi i k}$, and
	\item $\nu(\gamma_1 \gamma_2) \, j(\gamma_1\gamma_2,\tau)^k=\nu(\gamma_1)\nu(\gamma_2) \, j(\gamma_2,\tau)^k j(\gamma_1,\gamma_2\tau)^k$ for all $\gamma_1,\gamma_2\in \Gamma$.
\end{enumerate}
If $\nu$ is a multiplier system of weight $k$, then it is also a multiplier system of weight $k'$ for any $k'\equiv k\pmod 2$, and the conjugate $\overline\nu$ is a multiplier system of weight $-k$.
Define $\alpha_{\nu} \in [0,1)$ by the condition
$\nu\( \begin{psmallmatrix} 
	1 & 1 \\ 
	0 & 1
	\end{psmallmatrix} \) = e(-\alpha_{\nu})$.
For $n\in \Z$, define \ $n_{\nu} \defeq n-\alpha_{\nu}$.
The Kloosterman sum for a general multiplier $\nu$ is given by 
\be
\label{eq:kloos_def}
	S(m,n,c,\nu) := \sum_{\substack{0\leq a,d<c \\ \gamma=\begin{psmallmatrix} a&b\\ c&d 
	\end{psmallmatrix}
	\in \Gamma}} \overline\nu(\gamma) e\(\frac{m_\nu a+n_\nu d}{c}\).
\ee

We are interested in the multiplier system $\chi$ of weight $1/2$ on $\SL_2(\Z)$ given by
\begin{equation} \label{eq:eta-mult-def}
	\eta(\gamma z) = \chi(\gamma)\sqrt{cz+d}\,\eta(z), \qquad \gamma=
	\begin{pmatrix} 
	a & b \\ 
	c & d
	\end{pmatrix} \in \SL_2(\Z),
\end{equation}
where $\eta$ is as in
\eqref{eq:eta-def}. 
Rademacher (see (74.11), (74.12), and (71.21) of \cite{rademacherbook}) showed that 
for $\gamma=\begin{psmallmatrix}
 a&b\\ c&d
 \end{psmallmatrix}$ with $c>0$ we have 
\begin{equation} \label{eq:chi-dedekind-sum}
	\chi(\gamma) = \sqrt{-i} \, e^{-\pi i s(d,c)} \, e\(\frac{a+d}{24c}\),
\end{equation}
where $s(d,c)$ is as in \eqref{eq:ded-sum-def}.
From \eqref{eq:chi-dedekind-sum}  we have 
$\chi\(  \begin{psmallmatrix} 
	1 & 1 \\ 
	0 & 1
	\end{psmallmatrix}\) = e(1/24)$,
	so
$$
	\alpha_{\chi} = \tfrac{23}{24} \quad \text{and} \quad \alpha_{\bar\chi} = \tfrac{1}{24}.
$$
For the eta-multiplier, \eqref{eq:kloos_def}
and \eqref{eq:chi-dedekind-sum} give
\begin{equation*}
	S(m,n,c,\chi) = \sqrt i \, \sum_{\substack{d\bmod c \\ (d,c)=1}} e^{\pi i s(d,c)} 
	e\(\frac{(m-1)\overline d+(n-1)d}{c}\),
\end{equation*}
so the sums $A_c(n)$ are given by
\begin{equation} \label{eq:A-c-n-S}
	A_c(n) = \sqrt{-i} \, S(1,1-n,c,\chi).
\end{equation}
Recently, Ahlgren and Andersen gave 
the following Rademacher-type exact formula for the 
smallest parts function as a conditionally convergent infinite sum of $I$-Bessel functions and Kloosterman sums (\cite[Thm. 1]{AA}):
\be
\label{eq:AA}
\spt(n)=\frac{\pi}{6}(24n-1)^\frac{1}{4}\sum_{c=1}^\infty\frac{A_c(n)}{c}\left(I_{\frac{1}{2}}-I_{\frac{3}{2}}\right)\left(\frac{\pi\sqrt{24n-1}}{6c}\right).
\ee
We also have (\cite{AA})
\be
\label{eq:AASn}
S(n) = 2 \pi (24n-1)^{\frac{1}{4}} \sum_{c=1}^{\infty} 
\frac{A_c(n)}{c} I_{\frac{1}{2}} 
\( \frac{\pi \sqrt{24n-1}}{6c} \),
\ee
and 
\be
\label{eq:sptSp}
\spt(n)= \frac{1}{12} S(n) - \frac{24n-1}{12} p(n).
\ee

\section{The inner product $I_{m,n}(s,w)$}
\label{sec:inner}
We obtain an expression for the inner product of two Poincar\'e series by unfolding.
Let $z=x+iy \in \mathbb{H}$ and $s=\sigma+it \in \mathbb{C}$. For $m>0$, 
define the Poincar\'e series $\mathcal{U}_{m}(z, s, \frac{1}{2}, \chi)$ by
\be
\mathcal{U}_{m}(z, s, \frac{1}{2}, \chi)  \defeq \sum_{\gamma \in \Gamma_{\infty}\backslash \Gamma}
\overline{\chi(\gamma)} j(\gamma,z)^{-\frac{1}{2}} \im(\gamma z)^s e(m_{\chi}\gamma z), \qquad \sigma>1,
\ee
where $e(x) = e^{2\pi i x}$. 
Selberg \cite{selberg} proved that $\mathcal{U}_{m}(z, s, \frac{1}{2}, \chi)$ has an analytic continuation to a meromorphic
function.
Let $n\leq 0$ and define
\be
Z_{m,n}(s) \defeq \sum_{c>0}\frac{S(m,n,c,\chi)}{c^{2s}}.
\ee 
Note that $Z_{m,n}(s)$ converges absolutely for $\re(s)>1$ and is analytic in this half-plane. Selberg proved 
that $Z_{m,n}(s)$ has an analytic continuation to a function which is meromorphic in the whole plane.

Define the Petersson inner product by
\be
\label{eq:inner_prod}
	\langle f,g \rangle := \int_{\Gamma\backslash\mathbb{H}} f(\tau) 
	\overline{g(\tau)} \, \frac{dx\,dy}{y^2}.
\ee
Let $\re(w)>1$. Define
\be
I_{m,n}(s,w) \defeq 
\Big\langle   
\mathcal{U}_{m}(z, s, \tfrac{1}{2}, \chi), \overline{\mathcal{U}_{1-n}(z, w, -\tfrac{1}{2}, \overline{\chi}) }
\Big\rangle
\ee
and
  \begin{multline}
  \label{eq:Qdef}
Q_{m,n}(s,w) \defeq Z_{m,n}(s) \int_{0}^\infty  y^{w-s-1} e^{2\pi n_{\chi} y}\\
\times\(\int_{-\infty}^\infty    \frac{(u+i)^{-\frac{1}{2}}}{(u^2+1)^{s- \frac{1}{4}} }
 e(-n_{\chi}yu) 
\(e\(\frac{-m_{\chi}}{c^2 y(u+i)}\)-1\) \, du\) \, dy.
\end{multline}
We have the following expansion for the inner product for
$m_\chi>0$ and $n_{\chi}<0$. The case $m_\chi>0$ and $n_{\chi}>0$ is given in \cite{pribitkin}.
\begin{lemma}
\label{lem:innerprod}
Let $m_\chi>0$ and $n_{\chi}<0$. For 
$s=\sigma+it$ with $\sigma>1$ and $\re(w)>\sigma$ we have
\bee
I_{m,n}(s,w)
=
 \frac{\Gamma(w+s-1)  \Gamma(w-s)}{\Gamma\(w+\frac{1}{4}\) \Gamma\(s-\frac{1}{4}\)}
(-1)^{-s-w}  Z_{m,n}(s)  \pi^{s-w+1}
 (-i)^{-\frac{1}{2}}    ( n_{\chi})^{s-w}
  4^{-w+1} +Q_{m,n}(s,w).
 \eee
\end{lemma}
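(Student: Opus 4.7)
The approach is Petersson's unfolding method, adapting Pribitkin's computation \cite{pribitkin} for $n_\chi>0$ to the sign configuration $m_\chi>0$, $n_\chi<0$ required here. By \eqref{eq:inner_prod}, $I_{m,n}(s,w)=\int_{\Gamma\backslash\mathbb{H}} \mathcal{U}_m(z,s,\tfrac12,\chi)\cdot \mathcal{U}_{1-n}(z,w,-\tfrac12,\bar\chi)\,dx\,dy/y^2$, and this integrand is $\Gamma$-invariant because the weights $+\tfrac12$ and $-\tfrac12$ cancel and the multipliers $\chi,\bar\chi$ multiply to $1$. Expanding $\mathcal{U}_m$ as a sum over $\Gamma_\infty\backslash\Gamma$, applying $z\mapsto\gamma z$ in each summand, and using the automorphy of $\mathcal{U}_{1-n}$ collapses the integral to one over the strip $\Gamma_\infty\backslash\mathbb{H}=\{0\le x<1,\,y>0\}$:
\bee
I_{m,n}(s,w)=\int_0^\infty \int_0^1 y^{s-2}\,e(m_\chi z)\,\mathcal{U}_{1-n}(z,w,-\tfrac12,\bar\chi)\,dx\,dy .
\eee

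Next I would expand $\mathcal{U}_{1-n}$ by its definition and separate the identity coset from the rest. The identity coset contributes $0$ because its $x$-integral is $\int_0^1 e((m_\chi+(1-n)_{\bar\chi})x)\,dx=\int_0^1 e((m-n)x)\,dx=\delta_{m,n}$, and our hypotheses $m_\chi>0>n_\chi$ force $m\ge1>0\ge n$, so $m\ne n$. For the remaining cosets $\gamma=\begin{psmallmatrix} a&b\\c&d\end{psmallmatrix}$ with $c>0$, I would use $\gamma z=a/c-1/(c(cz+d))$ to separate the $(a,d)$-dependence from the $z$-dependence. Combined with \eqref{eq:chi-dedekind-sum}, the sum over $d\bmod c$ with $(d,c)=1$ assembles (via $n_\chi=-(1-n)_{\bar\chi}$) into the Kloosterman sum $S(m,n,c,\chi)$, while the $x$-integral $\int_0^1$ combined with summation over the $d$-translates in $\Z$ unfolds to $\int_{-\infty}^\infty$. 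After the rescaling $u=(cx+d)/(cy)$, each $c$-contribution takes the form
\bee
\frac{S(m,n,c,\chi)}{c^{2s}}\int_0^\infty y^{w-s-1}\,e^{2\pi n_\chi y}\int_{-\infty}^\infty \frac{(u+i)^{-1/2}}{(u^2+1)^{s-1/4}}\,e(-n_\chi y u)\,e\!\(\frac{-m_\chi}{c^2 y(u+i)}\)\,du\,dy,
\eee
so summing over $c$ produces $Z_{m,n}(s)$ times a $(u,y)$-integral.

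To isolate the main term I would split the inner exponential as $1+\bigl[e\!\(\tfrac{-m_\chi}{c^2y(u+i)}\)-1\bigr]$. The bracketed piece matches \eqref{eq:Qdef} and contributes $Q_{m,n}(s,w)$. For the ``$1$'' piece, the $u$-integral is the Fourier transform of $(u+i)^{-s-1/4}(u-i)^{-s+1/4}$, which a Hankel/Mellin--Barnes contour evaluation expresses as a Gamma-function ratio times a power of $y$ (and explicit $\pi$ and branch-cut constants). Because $n_\chi<0$, the outer exponential $e^{2\pi n_\chi y}$ decays as $y\to\infty$, so the remaining $y$-integral is an elementary $\Gamma$-integral; after simplification one recovers the ratio $\Gamma(w+s-1)\Gamma(w-s)/[\Gamma(w+1/4)\Gamma(s-1/4)]$ together with the prefactor $(-1)^{-s-w}\pi^{s-w+1}(-i)^{-1/2}(n_\chi)^{s-w}4^{-w+1}$ stated in the lemma.

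The main obstacle will be the bookkeeping of branches, signs, and constants — in particular the interaction of the $(-1)^{-s-w}$ factor arising from writing $|n_\chi|^{s-w}=e^{-i\pi(s-w)}n_\chi^{s-w}$ with the $(-i)^{-1/2}$ inherited from $\chi(\gamma)=\sqrt{-i}\,e^{-\pi i s(d,c)}e((a+d)/24c)$ and with the normalizing $\pi^{s-w+1}4^{-w+1}$ from the $u$-rescaling. The structural contrast with Pribitkin's $n_\chi>0$ setting is that there the inner $(u,y)$-integral collapses to a $K$-Bessel function; here, with $n_\chi<0$, the analogous Bessel representation degenerates, and the main term must be read off directly as the explicit product of $\Gamma$-functions appearing in Lemma \ref{lem:innerprod}.
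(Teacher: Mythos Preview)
Your overall architecture matches the paper's: unfold the inner product (the paper simply cites \cite[Lemma 4.2]{AAimrn} for the identity you derive by hand), split off the factor $e\bigl(-m_\chi/(c^2y(u+i))\bigr)-1$ to define $Q_{m,n}$, and evaluate the remaining double integral explicitly.

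There is, however, a concrete gap in your evaluation of the main term. The inner $u$-integral
\[
\int_{-\infty}^\infty (u+i)^{-s-1/4}(u-i)^{-s+1/4}\,e(-n_\chi y u)\,du
\]
does \emph{not} reduce to a Gamma-ratio times a power of $y$: as the Fourier transform of a product with unequal exponents at $\pm i$ it is a genuine Whittaker function. The paper applies \cite[3.384 \#9]{table} to obtain
\[
\frac{(-1)^{2s}\pi(-i)^{-1/2}(-\pi n_\chi y)^{s-1}}{\Gamma(s-\tfrac14)}\,W_{-1/4,\,s-1/2}(-4\pi n_\chi y),
\]
so the remaining $y$-integral is not an elementary $\Gamma$-integral but a Mellin transform of a Whittaker function, which the paper evaluates via a second tabled formula \cite[\S1.13, 13.52]{MellinTable} as $\Gamma(w+s-1)\Gamma(w-s)/\Gamma(w+\tfrac14)$.

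Your intuition can be rescued by reversing the order of integration. Since $n_\chi<0$, the $y$-integral $\int_0^\infty y^{w-s-1}e^{2\pi n_\chi y(1-iu)}\,dy$ converges and equals $\Gamma(w-s)\,(-2\pi n_\chi)^{-(w-s)}(1-iu)^{-(w-s)}$; writing $1-iu=-i(u+i)$ turns the leftover $u$-integral into the Beta-type integral $\int_{-\infty}^\infty (u+i)^{-w-1/4}(u-i)^{-s+1/4}\,du$, which really is a Gamma-ratio, namely a constant times $\Gamma(w+s-1)/\bigl(\Gamma(w+\tfrac14)\Gamma(s-\tfrac14)\bigr)$. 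This route bypasses Whittaker functions entirely and makes the constant-tracking you flag at the end considerably lighter.
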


\begin{proof}
From the proof of \cite[Lemma 4.2]{AAimrn}, we have
\bee
I_{m,n}(s,w) = Z_{m,n}(s)\int_{-\infty}^\infty 
\(\frac{u+i}{|u+i|}\)^{-\frac{1}{2}}(u^2+1)^{-s}
\int_0^\infty y^{w-s-1}e\(\frac{- m_\chi}{c^2 y(u+i)}-  n_\chi y u\)e^{2\pi  n_\chi y}\, dydu.
\eee
Therefore,
\begin{align*}
I_{m,n}(s,w)&=
Z_{m,n}(s)\int_{0}^\infty  y^{w-s-1} e^{2\pi n_{\chi} y} \int_{-\infty}^\infty    
\frac{(u+i)^{-\frac{1}{2}}}{(u^2+1)^{s-\frac{1}{4}} }
e\(\frac{-m_{\chi}}{c^2 y(u+i)}\) e(-n_{\chi}yu) \, du dy\\
 &=
Z_{m,n}(s)\int_{0}^\infty  y^{w-s-1} e^{2\pi n_{\chi} y}
\int_{-\infty}^\infty    \frac{(u+i)^{-\frac{1}{2}}}{(u^2+1)^{s-\frac{1}{4}} }
 e(-n_{\chi}yu) 
  \, du dy + Q_{m,n}(s,w).
\end{align*}
Next we use the formula (\cite[3.384 \#9]{table})
\begin{multline*}
\int_{-\infty}^{\infty} (b+ix)^{-2\mu} (c-ix)^{-2\nu} e^{-ipx} dx\\
=2\pi(b+c)^{-\mu-\nu} \frac{(-p)^{\mu+\nu-1}}{\Gamma(2\mu)} \exp\(\frac{b-c}{2} p\) 
W_{\mu-\nu,\frac{1}{2}-\nu-\mu}(-bp-cp),
\end{multline*}
with parameters $p=2\pi n_{\chi} y$, $b=c=1$, $\mu=\frac{s}{2}-\frac{1}{8}$, and $\nu = \frac{s}{2}+\frac{1}{8}$.
 Here $W$ is the Whittaker function.
We obtain
\bee
\int_{-\infty}^\infty    \frac{(u+i)^{-\frac{1}{2}}}{(u^2+1)^{s-\frac{1}{4}} }
 e(-n_{\chi}yu) 
  \, du
 = 
\frac{(-1)^{2s} \pi (-i)^{-\frac{1}{2}}  (-\pi n_{\chi}y)^{s-1}}{\Gamma\(s-\frac{1}{4}\)} 
 W_{-\frac{1}{4},s-\frac{1}{2}}(-4 \pi n_{\chi}y).
\eee
It follows that
 \begin{align*}
  & I_{m,n}(s,w)-Q_{m,n}(s,w)\\
   &=   
  Z_{m,n}(s)(-1)^{-2s} 
  \frac{\pi (-i)^{-\frac{1}{2}}  }{\Gamma\(s-\frac{1}{4}\)} (-\pi n_{\chi})^{s-1}
  \int_{0}^\infty  y^{w-2} e^{2\pi n_{\chi} y}
W_{-\frac{1}{4},s-\frac{1}{2}}(-4 \pi n_{\chi}y)  \, dy\\
 &=   
  Z_{m,n}(s) (-1)^{-2s} 
  \frac{\pi (-i)^{-\frac{1}{2}}  }{\Gamma\(s-\frac{1}{4}\)} (-\pi n_{\chi})^{s-1}
  (-4\pi n_{\chi})^{-w+1}  \int_{0}^\infty  y^{w-2} e^{-\frac{y}{2}}
W_{-\frac{1}{4},s-\frac{1}{2}}(y)  \, dy.
  \end{align*}
 Now we use \cite[\S 1.13, 13.52]{MellinTable}
 to obtain 
\bee
 \int_{0}^\infty 
      y^{w-2} e^{-\frac{y}{2}}
 W_{-\frac{1}{4}, \,s-\frac{1}{2}}(y)  \, dy=
\frac{\Gamma(w+s-1)\Gamma(w-s)}{\Gamma\(w+\frac{1}{4}\)}.
\eee
Lemma \ref{lem:innerprod} follows.
\end{proof}

 \section{Bounds for Q}
 \label{sec:boundsForQ} 
 In this section we obtain a bound for the error term $Q_{1,n}(s,s+2)$ in Lemma \ref{lem:innerprod}.
We begin with a preliminary lemma. 
\begin{lemma}
\label{lem:eineq}
Let $\re(z)\leq 0$. Then,
$$|e^z - 1 | \leq 1.682 |z|^{\frac{1}{4}}.$$
\end{lemma}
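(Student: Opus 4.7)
The constant $1.682$ is suggestive: it agrees with $2^{3/4}=1.68179\ldots$ to four digits. My plan is therefore to split the argument at $|z|=2$ and combine two complementary elementary estimates, both valid under the hypothesis $\re z\le 0$.

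The first estimate is $|e^z-1|\le|z|$. To obtain it, I would write
\[
e^z-1=\int_0^1 z\,e^{tz}\,dt,
\]
and use the fact that $|e^{tz}|=e^{t\,\re z}\le 1$ whenever $\re z\le 0$ and $t\in[0,1]$. The second estimate is $|e^z-1|\le 2$, which comes from the triangle inequality together with $|e^z|=e^{\re z}\le 1$. Neither bound alone is of the shape $C|z|^{1/4}$, but together they interpolate correctly.

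For $|z|\le 2$, the first estimate gives
\[
|e^z-1|\le|z|=|z|^{1/4}\cdot|z|^{3/4}\le 2^{3/4}\,|z|^{1/4}.
\]
For $|z|\ge 2$, the second gives
\[
|e^z-1|\le 2=2^{3/4}\cdot 2^{1/4}\le 2^{3/4}\,|z|^{1/4}.
\]
Since $2^{3/4}<1.682$, this proves the lemma.

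There is essentially no obstacle here; the only mild subtlety is that the splitting point $|z|=2$ is forced, since it is precisely the value at which the two bounds $|z|$ and $2$ meet. Equivalently, it is the unique threshold at which the product $|z|^{3/4}$ and the constant $2^{3/4}$ balance, yielding the optimal constant $2^{3/4}$ in the stated inequality.
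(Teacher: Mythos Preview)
Your proof is correct and in fact cleaner than the paper's. The paper defines $f(z)=(e^z-1)/z^{1/4}$, bounds $|f|$ on the imaginary axis (using $2|\sin(y/2)|\le\min(|y|,2)$, which is exactly the pair of estimates you use, restricted to $\re z=0$) and on vertical lines $\re z=n$ with $n\le -2$ (using the trivial bound $|e^z-1|\le 2$), and then invokes the Phragm\'en--Lindel\"of principle on strips to interpolate. You short-circuit this by noting that both elementary bounds $|e^z-1|\le|z|$ and $|e^z-1|\le 2$ already hold throughout the closed left half-plane, so splitting at $|z|=2$ gives the conclusion directly. Your route avoids complex analysis entirely and sidesteps the branch-cut and growth-condition housekeeping implicit in the paper's application of Phragm\'en--Lindel\"of to a function involving $z^{1/4}$; the constant $2^{3/4}$ (and the splitting point $|z|=2$) are the same in both arguments.
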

\begin{proof}
Let $f(z) = \frac{e^{z}-1}{z^{\frac{1}{4}}}$, $f(0)=0$.
On $\re(z) = 0$ we have $$|f(z)| = \frac{2|\sin(y/2)|}{|y^{\frac{1}{4}}|} 
\leq \max\(\frac{2}{|y|^{\frac{1}{4}}}, \frac{|y|}{|y|^{\frac{1}{4}}}\).$$
Thus, $|f(z)| \leq 2^{\frac{3}{4}}$ on $\re(z) = 0$. 
Since the same bound holds trivially on $\re(z) = n$ with $n\leq -2$, 
the result follows by the Phragm\'en-Lindel\"of principle.
\end{proof}

\begin{lemma}
\label{lem:Qbound}
Let $s = \sigma+it$ with $\sigma =1/2+\delta/2$ and $0<\delta\leq 1/4$, and let $n\leq 0$ be an integer. Then \bee
|Q_{1,n}(s,s+2)| \leq 0.414  \zeta^{2}(1+\delta)
|n_{\chi}|^{-\frac{7}{4}}.
\eee
\end{lemma}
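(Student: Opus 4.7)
The plan is to apply Lemma \ref{lem:eineq} to the factor $e\bigl(-m_\chi/(c^2 y(u+i))\bigr)-1 = e^z - 1$ with $z = -2\pi i m_\chi/(c^2 y(u+i))$ and $m_\chi = 1/24$. First I would verify the hypothesis: since $1/(u+i) = (u-i)/(u^2+1)$, a direct computation gives $\re(z) = -2\pi m_\chi/(c^2 y (u^2+1)) \leq 0$. Lemma \ref{lem:eineq} then yields
\bee
|e^z - 1| \leq 1.682\,(2\pi m_\chi)^{1/4}\, c^{-1/2}\, y^{-1/4}\, (u^2+1)^{-1/8}.
\eee
After substituting this bound and taking absolute values throughout, the resulting upper bound becomes a product of a sum in $c$, an integral in $y$, and an integral in $u$, each absolutely convergent; Fubini then justifies interchanging the $c$-summation with both integrals.

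Each factor is estimated separately. For the $y$-integral, $w-s-1-1/4 = 3/4$ with $w = s+2$, and $n_\chi < 0$, so
\bee
\int_0^\infty y^{3/4} e^{-2\pi|n_\chi|y}\,dy = \frac{\Gamma(7/4)}{(2\pi|n_\chi|)^{7/4}}.
\eee
For the $u$-integral, combining $|u+i|^{-1/2} = (u^2+1)^{-1/4}$ with the previous bound produces an integrand dominated by $(u^2+1)^{-\sigma - 1/8}$; since $\sigma = 1/2 + \delta/2 \geq 1/2$, I would bound it by $\int_{-\infty}^\infty (u^2+1)^{-5/8}\,du = \sqrt\pi\,\Gamma(1/8)/\Gamma(5/8)$, which is the worst case. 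For the $c$-sum, Lehmer's bound \eqref{eq:lehmerAc} together with \eqref{eq:A-c-n-S} gives $|S(1,n,c,\chi)| = |A_c(1-n)| \leq \tau(c)\sqrt{c}$, so by \eqref{prop:ZetaSq} we get
\bee
\sum_{c\geq 1}\frac{|S(1,n,c,\chi)|}{c^{2\sigma+1/2}} \leq \sum_{c\geq 1}\frac{\tau(c)}{c^{1+\delta}} = \zeta^2(1+\delta).
\eee

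Assembling the pieces gives
\bee
|Q_{1,n}(s,s+2)| \leq 1.682\,(\pi/12)^{1/4} \cdot \frac{\sqrt\pi\,\Gamma(1/8)}{\Gamma(5/8)} \cdot \frac{\Gamma(7/4)}{(2\pi)^{7/4}} \cdot \zeta^2(1+\delta)\,|n_\chi|^{-7/4}.
\eee
The remaining task is the numerical verification of the constant: using $\Gamma(1/8) \approx 7.534$, $\Gamma(5/8) \approx 1.435$, $\Gamma(7/4) \approx 0.919$, and $(2\pi)^{7/4} \approx 24.94$, a direct evaluation shows the explicit constant is below $0.414$. The main potential pitfall is the arithmetic bookkeeping of these constants; conceptually the argument reduces an oscillatory integral to an absolutely convergent product via Lemma \ref{lem:eineq} and Lehmer's bound, and the gamma-function identities handle the rest.
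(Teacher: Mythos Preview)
Your proof is correct and follows essentially the same approach as the paper: both apply Lemma~\ref{lem:eineq} to the factor $e(\cdot)-1$, separate the resulting bound into a $c$-sum (handled via Lehmer's bound and \eqref{prop:ZetaSq}), a $y$-integral (evaluated as a Gamma function), and a $u$-integral, then assemble the numerical constant. The only cosmetic difference is that the paper evaluates $\int_{-\infty}^\infty (u^2+1)^{-\sigma-1/8}\,du$ exactly as $\sqrt\pi\,\Gamma(\sigma-3/8)/\Gamma(\sigma+1/8)$ and then argues this ratio is decreasing in $\delta$, whereas you bound the integrand pointwise by $(u^2+1)^{-5/8}$ using $\sigma\geq 1/2$; both routes give the same worst-case value $\sqrt\pi\,\Gamma(1/8)/\Gamma(5/8)$.
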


\begin{proof}
From \eqref{eq:Qdef} we have 
\begin{align*}
|Q_{1,n}(s,s+2)|   &\leq  |Z_{1,n}(s)|\int_{0}^\infty  
 y e^{2\pi n_{\chi} y}
 \int_{-\infty}^\infty    \frac{1}{(u^2+1)^{\sigma} }
\; \;   \Bigg|   e\(\frac{-1}{24c^2 y(u+i)}\) -1 
\Bigg|\, du dy.
\end{align*}
By Lemma \ref{lem:eineq} we obtain 
\begin{align*}
 |Q_{1,n}(s,s+2)|&\leq 
1.682 |Z_{1,n}(s)|\int_{0}^\infty  y e^{2\pi n_{\chi} y}  \( \frac{\pi}{12 c^2 y} \)^{\frac{1}{4}}
\int_{-\infty}^\infty    \frac{1}{(u^2+1)^{\sigma+\frac{1}{8}} } \, dudy\\
&\leq
2.136  \; \frac{\Gamma\(\sigma - \frac{3}{8}\)}{\Gamma\(\sigma + \frac{1}{8}\)}  \sum_{c>0}\frac{|S(1,n,c,\chi)|}{c^{2\sigma+\frac{1}{2}}}\int_{0}^\infty  
 y^{\frac{3}{4}} e^{2\pi n_{\chi} y}
 \, dy,
 \end{align*}
where in the last line we use \cite[3.251 \#2]{table}.
Note that
$$\int_{0}^\infty  
 y^{\frac{3}{4}} e^{2\pi n_{\chi} y} \, dy
 =(-2\pi n_{\chi})^{-\frac{7}{4}} 
\; \Gamma\(\tfrac{7}{4}\).$$
Using \eqref{prop:ZetaSq} and the fact that $\frac{\Gamma\(\delta/2 + 1/8\)}
{\Gamma\(\delta/2 + 5/8\)}$ is decreasing in this range of $\delta$, we get
\begin{align*}
|Q_{1,n}(s,s+2)|&\leq 
2.136 \sum_{c>0}\frac{\tau(c)}{c^{1+\delta}} 
(-2\pi n_{\chi})^{-\frac{7}{4}} 
 \; \frac{\Gamma\(\frac{7}{4}\) \Gamma\(\frac{1}{8}\)}
{\Gamma\(\frac{5}{8}\)}\\
&\leq 0.414  \zeta^{2}(1+\delta)
|n_{\chi}|^{-\frac{7}{4}}. \qedhere
\end{align*}
 \end{proof}

\section{Bounds on $U_m$}
\label{sec:boundsForUm} 
In this section we give a bound for the norm of the Poincar\'e series. The proofs 
use similar techniques as in \cite[Lemma 3.2]{yoshida}. 
We will need the following bounds for the  $K$-Bessel function.
\begin{lemma}
\label{lem:KBessel}
For $y>0$ we have
\be
K_0(y) < 0.975 y^{-\frac{1}{2}}
 \ee
and 
\be
K_0(y) < 1.7 y^{-\frac{7}{2}}.
\ee
\end{lemma}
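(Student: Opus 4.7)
The plan is to derive both bounds from the classical integral representation
\[
K_0(y) = \int_0^\infty e^{-y \cosh t}\,dt
\]
together with the elementary inequality $\cosh t \geq 1 + t^2/2$ (immediate from the Taylor series). This yields the preliminary estimate
\[
K_0(y) \leq e^{-y}\int_0^\infty e^{-y t^2/2}\,dt = e^{-y}\sqrt{\frac{\pi}{2y}},
\]
from which both claims should follow, possibly with extra work near $y=0$.

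The second inequality is then a one-variable calculus exercise: multiplying by $y^{7/2}$ gives $y^{7/2} K_0(y) \leq y^3 e^{-y}\sqrt{\pi/2}$, and since $y^3 e^{-y}$ attains its maximum $27 e^{-3}$ at $y=3$, a direct numerical check confirms $27 e^{-3}\sqrt{\pi/2} < 1.7$. For the first inequality the same preliminary bound yields only $y^{1/2} K_0(y) \leq e^{-y}\sqrt{\pi/2}$, whose value at $y=0$ is $\sqrt{\pi/2} \approx 1.253 > 0.975$, so the preliminary bound alone is not enough near $0$. The plan is to split at an explicit cutoff $y_0$: for $y \geq y_0$ apply the preliminary bound with $y_0$ chosen so that $e^{-y_0}\sqrt{\pi/2} \leq 0.975$ (any $y_0 \geq 0.253$ works); for $0 < y \leq y_0$ fall back on the Frobenius expansion
\[
K_0(y) = -\bigl(\log(y/2) + \gamma\bigr) I_0(y) + \sum_{n \geq 1} \frac{(y/2)^{2n}}{(n!)^2} \sum_{k=1}^n \frac{1}{k},
\]
together with explicit estimates on $I_0(y)$ and on the tail of the right-hand series to produce an explicit majorant of $y^{1/2} K_0(y)$ on $(0, y_0]$.

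The main obstacle is this small-$y$ analysis: $K_0$ has a logarithmic singularity at $0$ and one needs an explicit (not merely asymptotic) upper bound tight enough that $y^{1/2}$ times it stays below $0.975$. Empirically the maximum of $y^{1/2}K_0(y)$ on $(0,\infty)$ is approximately $0.79$, attained near $y \approx 0.15$, so there is ample slack to absorb fairly crude error terms in the Frobenius-series estimate, and the final verification reduces to checking that an explicit elementary function of one variable stays below $0.975$ on $(0, y_0]$.
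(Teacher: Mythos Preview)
Your approach is correct but genuinely different from the paper's. Both routes ultimately rest on an estimate of the form $K_0(y)\lesssim e^{-y}\sqrt{\pi/(2y)}$. The paper, however, cites Luke's sharper inequality
\[
K_0(y)<\frac{\sqrt{\pi/2}\,(16y+7)}{e^{y}\,y^{1/2}\,(16y+9)}\qquad(y>0),
\]
whose extra rational factor $(16y+7)/(16y+9)$ is monotone increasing from $7/9$ at $y=0$. Since $\tfrac{7}{9}\sqrt{\pi/2}\approx 0.9748$, the first bound follows in one line with no small-$y$ analysis; for the second bound the paper combines Luke's inequality with $e^{y}>0.74\,y^{3}$. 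Your preliminary estimate from $\cosh t\ge 1+t^{2}/2$ coincides with Luke's bound with the rational factor replaced by $1$, which is why you are forced into the Frobenius-series argument near $y=0$. That plan is sound and there is indeed ample slack (the true maximum of $y^{1/2}K_0(y)$ is about $0.79$), but it costs you a case split and an explicit tail estimate that the paper avoids entirely by quoting the literature. Your treatment of the second inequality via $\max_{y>0} y^{3}e^{-y}=27e^{-3}$ is clean and complete, and is essentially equivalent in spirit to the paper's use of $e^{y}>0.74\,y^{3}$.
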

\begin{proof}
From \cite[(6.28)]{luke} we have
\bee
K_{0}(y)<
 \frac{\sqrt{\frac{\pi}{2}}(16y+7)}{e^{y} y^{\frac{1}{2}}(16y+9)}
 \eee 
for $y>0$.
Since $\frac{\sqrt{\pi}(16y+7)}{\sqrt{2}e^{y}(16y+9)}< \frac{7 \sqrt{\pi}}{9\sqrt{2}}$, the
 first inequality follows.
To obtain the second inequality, we use that 
$e^y > 0.74 y^{3}$. Then
\bee K_{0}(y)<
 \frac{\sqrt{\pi}(16y+7)}{0.74\sqrt{2} y^{\frac{7}{2}}(16y+9)}
 <\frac{\sqrt{\pi}}{0.74\sqrt{2} y^{\frac{7}{2}}}
  <1.7y^{-\frac{7}{2}}.\qedhere
 \eee
\end{proof}
We will also need the following integral representation of $K_0$ (\cite[(10.32.10)]{dlmf}):
\begin{equation}
\label{eq:intK0}
K_0(z) = \frac{1}{2} \int_{0}^{\infty} \exp\(-t-\frac{z^2}{4t}\) \frac{dt}{t}.
\end{equation}

\begin{proposition}
\label{prop:P1}
For $s=\sigma+it$ with $\sigma = 1/2+\delta/2$ and $0<\delta\leq 1/4$, we have
\bee
\lVert U_1(z,s+1,1/2,\chi) \rVert\leq  4.73.
\eee
\end{proposition}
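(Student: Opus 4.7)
To prove Proposition \ref{prop:P1} my plan is to compute $\lVert U_1(z, s+1, \tfrac{1}{2}, \chi) \rVert^2$ by the standard Rankin--Selberg unfolding technique, adapted to half-integral weight as in \cite[Lemma 3.2]{yoshida}. First, I unfold one copy of $U_1$ in $\langle U_1, U_1\rangle$ by expanding it as a sum over $\Gamma_\infty\backslash\Gamma$, collapsing the fundamental-domain integral into one over the strip $\Gamma_\infty\backslash \mathbb{H}$ against the kernel $y^{s+1} e(1_\chi z)$ (with the appropriate weight/multiplier factor to accommodate the half-integral weight). This reduces the problem to evaluating the $1_\chi$-th Fourier coefficient of $U_1(z, s+1, \tfrac{1}{2}, \chi)$ at the cusp $\infty$, integrated in $y$ against $y^{s-1}e^{-2\pi\,1_\chi y}$.

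Next, I substitute the Fourier expansion of $U_1$ at $\infty$, which splits into an ``identity'' term $y^{s+1}e(1_\chi z)$ coming from $\gamma=I$ in the Bruhat decomposition, plus an off-diagonal piece whose $n$-th coefficient is a sum over $c>0$ of Kloosterman sums $S(1,n,c,\chi)$ against Whittaker (or $K$-Bessel) values of argument proportional to $1/(cy)$. Orthogonality in $x$ retains only the $n_\chi=1_\chi$ term. The identity contribution gives an explicit ratio of $\Gamma$-values (essentially $\Gamma(2+\delta)/(4\pi\,1_\chi)^{2+\delta}$), which is uniformly bounded in $t=\im s$ and $\delta\in(0,1/4]$.

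For the off-diagonal contribution I would bound the Kloosterman sums via Lehmer's inequality \eqref{eq:lehmerAc}, so that $\sum_{c>0}|S(1,1,c,\chi)|/c^{3+\delta} \leq \zeta^2(5/2+\delta)$ is a bounded constant for $\delta\in(0,1/4]$. The remaining $y$-integral involving a $K_0$-value (obtained either directly from the Fourier expansion or via the integral representation \eqref{eq:intK0}) against $e^{-2\pi y}$ is estimated by splitting $(0,\infty)$ at a threshold $y_0$, applying the bound $K_0(y)<0.975\,y^{-1/2}$ from Lemma \ref{lem:KBessel} for $y\leq y_0$ and $K_0(y)<1.7\,y^{-7/2}$ for $y>y_0$ to ensure both endpoints converge.

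Finally, I combine the identity and off-diagonal bounds and optimize the threshold $y_0$ to obtain $\lVert U_1\rVert \leq 4.73$. The main obstacle is purely numerical: each step contributes an explicit constant, and their product must stay below $4.73$ \emph{uniformly} in $\delta\in(0,1/4]$ and $t\in\R$. Keeping the constants tight will require careful balancing of the small-$y$ and large-$y$ regimes of the Bessel integral, and using Lehmer's sharper Kloosterman bound rather than the trivial $|S|\leq c$; the uniformity in $t$ should follow because $t$ appears only in the phase of the Whittaker argument and drops out upon taking absolute values.
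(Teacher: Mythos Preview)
Your approach is essentially the paper's: unfold $\langle U_1, U_1\rangle$ into the diagonal $\Gamma$-term plus an off-diagonal Kloosterman sum over $c$, recognize the inner integral as a $K_0$ via \eqref{eq:intK0}, and finish with Lehmer's bound \eqref{eq:lehmerAc} and \eqref{prop:ZetaSq}. Two places where your sketch drifts from what actually happens, though.

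First, the off-diagonal integral depends on $c$: after the $y$-integration the argument of $K_0$ is $\pi/\bigl(6c\sqrt{x^2+1}\bigr)$, so applying $K_0(y)<0.975\,y^{-1/2}$ produces an extra factor of $c^{1/2}$. Combined with Lehmer this gives $\sum_c \tau(c)\,c^{1/2}\cdot c^{1/2}/c^{3+\delta}=\zeta^2(2+\delta)$, not the $\zeta^2(5/2+\delta)$ you wrote. Your factorization ``Kloosterman sum $\times$ $c$-independent integral'' is not available here, and getting this exponent right is exactly what makes the final constant come out below $4.73$.

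Second, no threshold splitting is needed for this proposition. The single bound $K_0(y)<0.975\,y^{-1/2}$ already leaves a convergent $x$-integral $\int (x^2+1)^{-\sigma-3/4}\,dx$; the sharper decay $K_0(y)<1.7\,y^{-7/2}$ is reserved for the companion estimate $\lVert U_{1-n}\rVert$ in Proposition~\ref{prop:norm1-n}, where the extra savings in $c$ and $|n_\chi|$ are genuinely required. With these corrections the paper obtains $\lVert U_1\rVert^2 \le 4.85 + 6.46\,\zeta^2(2+\delta) < 22.33$, hence the bound $4.73$.
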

\begin{proof}
Unfolding as in the proof of \cite[Lemma 1]{pribitkin} we have
\begin{multline*}
\lVert U_1(z,s+1,1/2,\chi) \rVert^2  = \(\frac{\pi}{6}\)^{-1-2\sigma} \Gamma(2\sigma+1) 
+ \sum_{c>0} \frac{S(1,1,c,\chi)}{c^{2s+2}} \\
\times \int_{0}^{\infty} \int_{-\infty}^{\infty} \frac{y^{-2it-1}}{(x^2+1)^{s+1}}
\left[ \frac{x+i}{(x^2+1)^{\frac{1}{2}}}\right]^{-\frac{1}{2}}
 e\(\frac{-1}{24yc^2 (x+i)} - \frac{y(x-i)}{24} \) \, dxdy.
\end{multline*}
Taking absolute values we see that
\begin{align*}
\label{eq:u1absInner}
\lVert U_1(z,s+1,1/2,\chi) \rVert^2  &\leq
4.85
+ \sum_{c>0} \frac{|S(1,1,c,\chi)|}{c^{2\sigma+2}}\\
&\times \int_{0}^{\infty} \int_{-\infty}^{\infty} \frac{y^{-1}}{(x^2+1)^{\sigma+1}}
 \exp\(\frac{-\pi y}{12}- \frac{\pi }{12yc^2(x^2+1)}\)
\, dxdy\\
&\leq
2\int_{-\infty}^{\infty} 
\frac{1}{(x^2+1)^{\sigma+1}}
K_0\Big(\frac{\pi }{6c(x^2+1)^{\frac{1}{2}}}\Big)
\, dx,
\end{align*}
where in the last inequality we used \eqref{eq:intK0}.
Using  Lemma \ref{lem:KBessel} we obtain
\bee
\lVert U_1(z,s+1,1/2,\chi) \rVert^2  \leq
4.85+ 6.46  \sum_{c>0} \frac{|S(1,1,c,\chi)|}{c^{2\sigma+2}}c^{\frac{1}{2}}.
\eee
The result follows by \eqref{eq:lehmerAc} and 
 \eqref{prop:ZetaSq}.
\end{proof}

 \begin{proposition}
 \label{prop:norm1-n}
 Let $s=\sigma+it$ with $\sigma=1/2+\delta/2$ and $0<\delta\leq 1/4$. For any integer $n\leq 0$ we have
 \bee
\lVert U_{1-n}(z,s+2,-1/2,\overline{\chi})\rVert
\leq 0.156 \zeta(1+\delta) \tau((1-24n)^2) |n_{\chi}|^{-\frac{7}{4}} .
 \eee
 \end{proposition}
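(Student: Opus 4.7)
The plan is to follow the proof of Proposition~\ref{prop:P1}, with the key modification that the second (rather than the first) bound on $K_{0}$ from Lemma~\ref{lem:KBessel} must be used in order to obtain the $|n_{\chi}|^{-7/4}$ decay. First I would unfold the Petersson inner product $\lVert U_{1-n}(z,s+2,-\tfrac{1}{2},\overline{\chi})\rVert^{2}$ as in~\cite[Lem.~1]{pribitkin}, producing a main term from the identity coset together with a sum over Kloosterman sums $S(1-n,1-n,c,\overline{\chi})$ divided by $c^{2s+4}$. Since $\alpha_{\overline{\chi}}=1/24$ one has $(1-n)_{\overline{\chi}}=(23-24n)/24=|n_{\chi}|$, so the main term equals $\Gamma(2\sigma+3)(4\pi|n_{\chi}|)^{-(2\sigma+3)}$; since $2\sigma+3=4+\delta$ and $|n_{\chi}|\geq 23/24$, this is bounded by a small constant times $|n_{\chi}|^{-7/2}$ uniformly on $\delta\in(0,1/4]$.

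For the off-diagonal part I would take absolute values exactly as in Proposition~\ref{prop:P1} (the unit-modulus bracket from the weight contributes $1$), reducing the inner exponential to $\exp(-2\pi|n_{\chi}|y-2\pi|n_{\chi}|/(c^{2}y(x^{2}+1)))$. The $y$-integral then collapses via $\int_{0}^{\infty}e^{-ay-b/y}\,dy/y=2K_{0}(2\sqrt{ab})$ with $a=2\pi|n_{\chi}|$ and $b=2\pi|n_{\chi}|/(c^{2}(x^{2}+1))$ into $2K_{0}(4\pi|n_{\chi}|/(c\sqrt{x^{2}+1}))$. Applying the second bound $K_{0}(y)<1.7\,y^{-7/2}$ from Lemma~\ref{lem:KBessel} then extracts precisely the factor $(4\pi|n_{\chi}|)^{-7/2}$, leaving the $x$-integral $\int_{-\infty}^{\infty}(x^{2}+1)^{-(\sigma+1/4)}\,dx=\sqrt{\pi}\,\Gamma(\sigma-1/4)/\Gamma(\sigma+1/4)$ by \cite[3.251~\#2]{table}; this is convergent (since $\sigma>1/4$) and uniformly bounded on $\delta\in(0,1/4]$ by monotonicity of the $\Gamma$-ratio.

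To handle the resulting $c$-sum, I would apply a Sali\'e-type refinement of Lehmer's bound, $|S(1-n,1-n,c,\overline{\chi})|\leq\tau((1-24n)^{2})\,c^{1/2}$, in which the divisor-function factor depends on the parameter $1-n$ rather than the modulus $c$; after cancellation against the $c^{7/2}$ coming from the $K_{0}$ estimate, the residual sum is $\sum c^{-2\sigma}=\zeta(1+\delta)$. Adding in the main-term contribution (which is of comparable or smaller size, being $O(|n_{\chi}|^{-(4+\delta)})$) and taking a square root produces the stated inequality. The main obstacle is tracking the numerical constants, so that the product of $1.7$ from Lemma~\ref{lem:KBessel}, the factor $(4\pi)^{-7/2}$, and the bound on the $\Gamma$-ratio combine, after the square root, to at most $0.156$.
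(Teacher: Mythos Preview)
Your unfolding, reduction to $K_{0}$ via \eqref{eq:intK0}, and use of the second bound in Lemma~\ref{lem:KBessel} are exactly what the paper does, and your $x$-integral exponent $-(\sigma+1/4)$ is correct. The gap is in the Kloosterman sum step. The individual estimate you invoke,
\[
|S(1-n,1-n,c,\overline{\chi})|\leq \tau((1-24n)^{2})\,c^{1/2},
\]
is not a known inequality: Weil/Lehmer-type bounds for the $\eta$-multiplier Kloosterman sums always carry a factor $\tau(c)$ (or $2^{\omega_o(c)}$) coming from the modulus, together with a gcd factor $(24(1-n)-1,c)^{1/2}$; there is no ``Sali\'e-type refinement'' that simply swaps the modulus-dependent divisor factor for a parameter-dependent one. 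Without such a bound your residual $c$-sum does not collapse to $\zeta(1+\delta)$, and even if it did, the square root would produce $\sqrt{\tau((1-24n)^{2})\,\zeta(1+\delta)}$ rather than the stated $\tau((1-24n)^{2})\,\zeta(1+\delta)$.

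The paper avoids this by bounding the \emph{sum} directly: using \cite[(2.27),(2.29)]{AAimrn} together with an argument as in \cite[p.~413]{iwkow}, one obtains
\[
\sum_{c\geq 1}\frac{|S(1-n,1-n,c,\chi)|}{c^{3/2+\delta}}\leq \frac{16}{\sqrt{3}}\,\zeta^{2}(1+\delta)\,\tau((1-24n)^{2})^{2}.
\]
Here the $\zeta^{2}$ arises from $\sum_{c}\tau(c)c^{-1-\delta}$ and the $\tau((1-24n)^{2})^{2}$ from summing the gcd contributions. Plugging this in gives $\lVert U_{1-n}\rVert^{2}\leq 0.0242\,\zeta^{2}(1+\delta)\,\tau((1-24n)^{2})^{2}\,|n_{\chi}|^{-7/2}$, and the square root then yields exactly the factors $\zeta(1+\delta)\,\tau((1-24n)^{2})$ in the statement with constant $\sqrt{0.0242}<0.156$.
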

\begin{proof}
Recalling that $(1-n)_{\overline{\chi}}=-n_{\chi} = |n_{\chi}|$, we have
\begin{multline*}
\lVert U_{1-n}(z,s+2,-1/2,\overline{\chi})\rVert^2 = 
 (4\pi |n_{\chi}|)^{-3-2\sigma} \Gamma(2\sigma+3)
+ \sum_{c>0} \frac{S(1-n,1-n,c,\overline{\chi})}{c^{2s+4}} \\
\times \int_{0}^{\infty} \int_{-\infty}^{\infty} \frac{y^{-2it-1}}{(x^2+1)^{s+2}}
\left[ \frac{x+i}{(x^2+1)^{\frac{1}{2}}}\right]^{\frac{1}{2}}  
e\(\frac{-|n_{\chi}| }{yc^2 (x+i)} - |n_{\chi}| \cdot y(x-i)\) \, dxdy.
\end{multline*}
Since $|n_{\chi}| \geq 23/24$, we see that the absolute value is bounded by
\begin{multline}
\label{eq:unabsInner}
\frac{3}{128\pi^4} |n_{\chi}|^{-4} 
+ \sum_{c>0} \frac{|S(1-n,1-n,c,\overline{\chi})|}{c^{2\sigma+4}} 
\int_{0}^{\infty} \int_{-\infty}^{\infty} \frac{y^{-1}}{(x^2+1)^{\sigma+2}}\\
\times \exp(2\pi n_{\chi} y) 
\exp\(\frac{2\pi n_{\chi}}{yc^2(x^2+1)}\) \, dxdy.
\end{multline}
By  \eqref{eq:intK0}, the double integral in   \eqref{eq:unabsInner}
becomes
\be
\label{eq:doubleint1-n}
2\int_{-\infty}^{\infty} 
\frac{1}{(x^2+1)^{\sigma+2}}
K_0\(\frac{4\pi |n_{\chi}|}{c(x^2+1)^{\frac{1}{2}}}\)
\, dx.
\ee
Using Lemma \ref{lem:KBessel} and estimating with $\sigma=1/2$, we see that \eqref{eq:doubleint1-n} is bounded by 
$0.0026 \( \tfrac{c}{|n_{\chi}|} \)^{\frac{7}{2}}$.
Thus,
\be
\label{eq:u1-nIneqK}
\lVert U_{1-n}(z,s+2,-1/2,\overline{\chi})\rVert^2
 \leq
\frac{3}{128\pi^4} |n_{\chi}|^{-4} 
+ 0.0026 |n_{\chi}|^{-\frac{7}{2}}  \sum_{c>0} 
\frac{|S(1-n,1-n,c,\chi)|}{c^{\frac{3}{2}+\delta}} .
\ee
From an argument as in \cite[page 413]{iwkow} using
 \cite[(2.27), (2.29)]{AAimrn} we get 
\bee
\label{eq:kloostermanIAA}
\sum_{c=1}^{\infty} \frac{|S(1-n,1-n,c,\chi)|}{c^{\frac{3}{2}+\delta}} 
\leq \frac{16}{\sqrt{3}} \, \zeta^{2}(1+\delta) \tau((1-24n)^2)^2.
\eee
Note that  $|n_{\chi}|\geq 23/24$. From this and  
\eqref{eq:u1-nIneqK}
we obtain
\begin{align*}
\lVert U_{1-n}(z,s+2,-1/2,\overline{\chi})\rVert^2 &\leq \frac{3}{128\pi^4} |n_{\chi}|^{-4} 
 + 0.0241 |n_{\chi}|^{-\frac{7}{2}}  \zeta^{2}(1+\delta) \tau((1-24n)^2)^2\\
&\leq 0.0242 |n_{\chi}|^{-\frac{7}{2}} \zeta^{2}(1+\delta) \tau((1-24n)^2)^2.
\end{align*}
Proposition \ref{prop:norm1-n} follows.
\end{proof}

\section{Bounds for the Kloosterman zeta function}
\label{sec:boundsForKZeta}
Now we can give a bound for the Kloosterman zeta function. 
Pribitkin's main theorem  \cite{pribitkin} is an ineffective version of this result valid in more generality. 
In this section we will prove the following theorem.
\begin{theorem}
\label{th:KZbound}
Let $n\leq 0$ and $s = \sigma+it$ with $\sigma = 1/2+\delta/2$ and $0<\delta\leq 1/4$. Then
\bee
|Z_{1,n}(s)|\leq
189.91 \zeta^2(1+\delta)
\tau((1-24n)^2)
(1+|t|)^{\frac{1}{2}}
|n_{\chi}|^{\frac{1}{4}}.
\eee
\end{theorem}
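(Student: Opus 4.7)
The plan is to specialize Lemma~\ref{lem:innerprod} to $m=1$ and $w=s+2$, invert the resulting identity to isolate $Z_{1,n}(s)$, and then control the right-hand side using Cauchy--Schwarz together with the bounds from Lemma~\ref{lem:Qbound} and Propositions~\ref{prop:P1}, \ref{prop:norm1-n}. With these parameters $w+s-1=2s+1$, $w-s=2$ (so $\Gamma(w-s)=1$), and $w+\tfrac14=s+\tfrac94$; regrouping the unimodular factors $(-1)^{-s-w}(n_{\chi})^{s-w}$ via $-n_{\chi}>0$ collapses them to $|n_{\chi}|^{-2}$ up to a phase of modulus one. Solving for $Z_{1,n}(s)$ then gives an identity of the form
\[
Z_{1,n}(s)\;=\;(\text{unit phase})\cdot\pi\,|n_{\chi}|^{2}\,4^{s+1}\,\frac{\Gamma(s+9/4)\,\Gamma(s-1/4)}{\Gamma(2s+1)}\,\bigl(I_{1,n}(s,s+2)-Q_{1,n}(s,s+2)\bigr).
\]

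Next I would estimate the bracket. Cauchy--Schwarz on the Petersson inner product gives
\[
|I_{1,n}(s,s+2)|\leq \|\mathcal{U}_1(\cdot,s+1,\tfrac12,\chi)\|\cdot\|\mathcal{U}_{1-n}(\cdot,s+2,-\tfrac12,\overline{\chi})\|,
\]
and plugging in Propositions~\ref{prop:P1} and~\ref{prop:norm1-n} yields a bound $\leq (4.73)(0.156)\,\zeta(1+\delta)\,\tau((1-24n)^2)\,|n_{\chi}|^{-7/4}$. Lemma~\ref{lem:Qbound} supplies $|Q_{1,n}(s,s+2)|\leq 0.414\,\zeta^{2}(1+\delta)\,|n_{\chi}|^{-7/4}$. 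Using $\zeta(1+\delta)\leq\zeta^{2}(1+\delta)$ and $\tau((1-24n)^{2})\geq 1$, the bracket is therefore at most $C_{1}\,\zeta^{2}(1+\delta)\,\tau((1-24n)^{2})\,|n_{\chi}|^{-7/4}$ for an explicit $C_{1}\approx 1.15$. Combined with the $|n_{\chi}|^{2}$ prefactor in the identity, this produces exactly the claimed $|n_{\chi}|^{1/4}$ dependence.

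The main obstacle is a quantitative Stirling estimate for the Gamma quotient $|\Gamma(s+9/4)\Gamma(s-1/4)/\Gamma(2s+1)|$ that is uniform for $\sigma=1/2+\delta/2$, $0<\delta\leq 1/4$, and all $t\in\R$. The heuristic $|\Gamma(\sigma+it)|\sim\sqrt{2\pi}\,|t|^{\sigma-1/2}e^{-\pi|t|/2}$ shows the exponential factors $e^{-\pi|t|}$ from the two gammas in the numerator and from $\Gamma(2s+1)$ in the denominator cancel, while the polynomial exponents satisfy $(\sigma+\tfrac74)+(\sigma-\tfrac34)-(2\sigma+\tfrac12)=\tfrac12$, producing growth $\asymp |t|^{1/2}$. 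I would make this explicit by splitting into small $|t|$ (handled by direct numerical bounds on the relevant vertical strips) and large $|t|$ (handled by an explicit form of Stirling with a controlled error term), arriving at an inequality
\[
\Bigl|\frac{\Gamma(s+9/4)\,\Gamma(s-1/4)}{\Gamma(2s+1)}\Bigr|\;\leq\; C_{2}\,(1+|t|)^{1/2}.
\]
The hard part is keeping $C_{2}$ sharp enough that the final constant $\pi\cdot 4^{\sigma+1}\cdot C_{1}\cdot C_{2}$ fits under $189.91$ uniformly in $\delta$, and ensuring correct branch bookkeeping so that no stray exponential factor sneaks into the final bound. Multiplying the four ingredients then gives Theorem~\ref{th:KZbound}.
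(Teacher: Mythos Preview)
Your identity for $Z_{1,n}(s)$, the treatment of $Q_{1,n}$, and the plan for the Gamma quotient all match the paper's approach. The genuine gap is the Cauchy--Schwarz step. By definition
\[
I_{1,n}(s,s+2)=\bigl\langle \mathcal{U}_{1}(z,s,\tfrac12,\chi),\ \overline{\mathcal{U}_{1-n}(z,s+2,-\tfrac12,\overline{\chi})}\bigr\rangle,
\]
so Cauchy--Schwarz yields $\lVert \mathcal{U}_{1}(\cdot,s,\tfrac12,\chi)\rVert$ in the first factor, not $\lVert \mathcal{U}_{1}(\cdot,s+1,\tfrac12,\chi)\rVert$. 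Proposition~\ref{prop:P1} only controls the latter (its proof unfolds and needs $\re(s+1)>1$); at $\sigma=\tfrac12+\tfrac{\delta}{2}\leq \tfrac58$ you have no bound on $\lVert \mathcal{U}_{1}(\cdot,s,\tfrac12,\chi)\rVert$, and in fact this norm is not uniformly controlled near the critical line because of the continuous and discrete spectrum of $\Delta_{1/2}$.

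The missing ingredient, which the paper supplies in Proposition~\ref{prop:Ibound}, is the resolvent identity from Goldfeld--Sarnak (and Sarnak's appendix): one writes $\mathcal{U}_{1}(z,s,\tfrac12,\chi)$ as an explicit constant times $R_{s(1-s)}\,\mathcal{U}_{1}(z,s+1,\tfrac12,\chi)$, where $R_{\lambda}=(\Delta_{1/2}+\lambda)^{-1}$. Then
\[
|I_{1,n}(s,s+2)|\;\leq\; \frac{(\pi/6)\,|s-\tfrac14|}{\mathrm{dist}\bigl(s(1-s),\ \mathrm{spec}(\Delta_{1/2})\bigr)}\,\lVert \mathcal{U}_{1}(\cdot,s+1,\tfrac12,\chi)\rVert\,\lVert \mathcal{U}_{1-n}(\cdot,s+2,-\tfrac12,\overline{\chi})\rVert,
\]
and the spectral distance is bounded below using $\lambda_{0}(\tfrac12)=\tfrac{3}{16}$ together with the explicit gap $\lambda_{1}(\tfrac12)>3.86$ (for $|t|\leq 1$) and the trivial bound $\mathrm{dist}\geq |t|\,(2\sigma-1)$ (for $|t|>1$). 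Only after this step do Propositions~\ref{prop:P1} and~\ref{prop:norm1-n} legitimately apply; the resolvent factor is also what ultimately produces the extra $\zeta(1+\delta)$ (via $\frac{1}{\delta}\leq \zeta(1+\delta)$) needed to land on $\zeta^{2}(1+\delta)$ in the final bound.
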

The proof of Theorem \ref{th:KZbound} requires some
preliminary results.
 Let $\mathcal{L}_{\frac{1}{2}}(N,\chi)$ 
denote the $L^2$-space of automorphic functions with respect to the Petersson inner product
 given by 
\eqref{eq:inner_prod}.
Define
$\Delta_{\frac{1}{2}} \defeq y^2 \( \frac{\partial^2}{\partial x^2} +\frac{\partial^2}{\partial y^2}\) 
-\frac{i y}{2} \frac{\partial}{\partial x}$.
Then $\Delta_{\frac{1}{2}}$  has a unique self-adjoint extension to $\mathcal{L}_{\frac{1}{2}}(N,\chi)$.
Denote by $\lambda_{0}(1/2) \leq \lambda_{1}(1/2) \leq \cdots$ the discrete spectrum of
$\Delta_{\frac{1}{2}}$.
From \cite[Prop. 1.2]{sarnakAdditive} we have $\lambda_0(1/2) = 3/16$.

 By Lemma \ref{lem:innerprod} we find that
\bee
Z_{1,n}(s)
=
(I_{1,n}(s,w)-Q_{1,n}(s,w))
\frac{\Gamma\(w+\frac{1}{4}\)\Gamma\(s-\frac{1}{4}\)}{\Gamma(w+s-1)\Gamma(w-s)}
(-1)^{s+w}
\pi^{w-s-1}
(-i)^{\frac{1}{2}}
4^{w-1}
|n_{\chi}|^{w-s}.\\
\eee
Now let $w=s+2$. Then
\be
\label{eq:absZexpr}
|Z_{1,n}(s)|
\leq
(|I_{1,n}(s,s+2)|+|Q_{1,n}(s,s+2)|)
\frac{|\Gamma\(s+\frac{9}{4}\)\Gamma\(s-\frac{1}{4}\)|}{|\Gamma(2s+1)|}
4^{\sigma+1}
\pi
|n_{\chi}|^{2}.
\ee

We have the following bound for $|I_{1,n}(s,s+2)|$.
\begin{proposition}
\label{prop:Ibound}
Let  $s=\sigma+it = 1/2+\delta/2+it$ with $0<\delta\leq 1/4$.
Then
\bee
|I_{1,n}(s,s+2)| \leq 0.674 \tau((1-24n)^2) \zeta^2(1+\delta)|n_{\chi}|^{-\frac{7}{4}}.
\eee
\end{proposition}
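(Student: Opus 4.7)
My plan is to bound the Petersson inner product $I_{1,n}(s,s+2)$ directly by the Cauchy--Schwarz inequality,
\[
|I_{1,n}(s,s+2)| \;\leq\; \bigl\|\mathcal{U}_{1}(z,s,\tfrac{1}{2},\chi)\bigr\|\cdot \bigl\|\mathcal{U}_{1-n}(z,s+2,-\tfrac{1}{2},\overline{\chi})\bigr\|,
\]
and then to feed in the two norm estimates already prepared in the previous section. The second factor is controlled immediately by Proposition \ref{prop:norm1-n}, which supplies the $\tau((1-24n)^{2})$, one power of $\zeta(1+\delta)$, and the $|n_{\chi}|^{-7/4}$ decay in $n$. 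The first factor would be handled by a shift-down of Proposition \ref{prop:P1}: in the proof of that proposition one unfolds $\|\mathcal{U}_{1}(z,s+1,\tfrac{1}{2},\chi)\|^{2}$ and estimates the resulting Kloosterman-twisted double integral via the $K$-Bessel bounds of Lemma \ref{lem:KBessel} together with the Lehmer bound $|S(1,1,c,\chi)|\leq \tau(c)\sqrt{c}$. I would mimic that unfolding for $\|\mathcal{U}_{1}(z,s,\tfrac{1}{2},\chi)\|^{2}$ with $\sigma=1/2+\delta/2$; the $c$-sum now has abscissa $1+\delta$ rather than $2+\delta$, and so brings in an extra factor of $\zeta(1+\delta)$. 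This is exactly what accounts for the $\zeta^{2}(1+\delta)$ in the statement of Proposition \ref{prop:Ibound}: one power from each norm.

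Once both norm bounds are available, their product is of the expected shape $C\cdot \zeta^{2}(1+\delta)\,\tau((1-24n)^{2})\,|n_{\chi}|^{-7/4}$ with an explicit constant. A careful numerical accounting (the $4.73$ from Proposition \ref{prop:P1}, the $0.156$ from Proposition \ref{prop:norm1-n}, the extra zeta factor from the shifted unfolding, and the gamma-function constants picked up along the way) should then land the overall constant at the claimed $0.674$.

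The main obstacle is the analytic subtlety that at $\sigma=1/2+\delta/2<1$ the defining series for $\mathcal{U}_{1}(z,s,\tfrac{1}{2},\chi)$ no longer converges absolutely, so the $L^{2}$-norm and the inner product have to be interpreted through the Selberg meromorphic continuation. The fact that $\lambda_{0}(1/2)=3/16$ places the nearest exceptional pole at $s=3/4$, safely away from the line $\sigma=1/2+\delta/2$ for small $\delta$, so the continuation is holomorphic there; making the Cauchy--Schwarz step and the shifted unfolding rigorous will require a Maass--Selberg-type regularization (subtracting off the explicit Eisenstein contribution to produce an $L^{2}$ remainder). Tracking the regularized constants honestly is what will produce the explicit value $0.674$.
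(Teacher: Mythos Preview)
Your approach has a genuine gap: direct Cauchy--Schwarz on $I_{1,n}(s,s+2)$ cannot be made to work, and the paper does \emph{not} proceed that way. The paper instead invokes the Goldfeld--Sarnak resolvent identity (\cite[(2.4)]{gs}, \cite[(A.2.9)]{sarnakapp}), which for $k=1/2$ gives schematically
\[
\mathcal{U}_{1}(z,s,\tfrac12,\chi)\;=\;-4\pi m_\chi\,(s-\tfrac14)\,R_{s(1-s)}\,\mathcal{U}_{1}(z,s+1,\tfrac12,\chi),
\]
so that
\[
|I_{1,n}(s,s+2)|\;\le\;\frac{(\pi/6)\,|s-\tfrac14|}{\mathrm{dist}\bigl(s(1-s),\mathrm{spec}(\Delta_{1/2})\bigr)}\,\bigl\|\mathcal{U}_{1}(\cdot,s+1,\tfrac12,\chi)\bigr\|\,\bigl\|\mathcal{U}_{1-n}(\cdot,s+2,-\tfrac12,\overline{\chi})\bigr\|.
\]
The point is that the norm on the right is that of $\mathcal{U}_1(\cdot,s+1,\dots)$, with real part $\sigma+1>3/2$, which is exactly what Proposition~\ref{prop:P1} bounds by the constant $4.73$ (no $\zeta$ factor). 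The second $\zeta(1+\delta)$ in the statement comes from the resolvent: for $|t|>1$ the distance to spectrum is at least $|t|\delta$, so the prefactor contributes $\asymp 1/\delta$, which is then absorbed via $\zeta(1+\delta)/\delta\le \zeta^{2}(1+\delta)$.

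Your proposed ``shift down'' of Proposition~\ref{prop:P1} to bound $\|\mathcal{U}_{1}(\cdot,s,\tfrac12,\chi)\|$ by unfolding does not go through. First, the unfolding is only valid for $\sigma>1$; at $\sigma=\tfrac12+\tfrac{\delta}{2}$ the series does not converge absolutely and the formal computation is unjustified. Second, even formally the off-diagonal $c$-sum diverges: mimicking the proof of Proposition~\ref{prop:P1} with $s$ in place of $s+1$ leads (after the $K_0$ bound and Lehmer) to $\sum_c \tau(c)\,c^{-(2\sigma-1)}=\sum_c \tau(c)\,c^{-\delta}$, not to $\zeta^{2}(1+\delta)$ as you claim. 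Your accounting ``one $\zeta$ from each norm'' is therefore wrong; the missing ingredient is precisely the resolvent step, which both shifts the first Poincar\'e series into the region of absolute convergence and supplies the factor $1/\delta$.
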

\begin{proof}
From \cite[(2.4)]{gs} and \cite[(A.2.9)]{sarnakapp} we see that
\begin{align*}
|I_{1,n}(s,s+2)|
&=|\langle   \mathcal{U}_1(z,s,1/2,\chi), \overline{\mathcal{U}_{1-n}(z,s+2,-1/2,\overline{\chi})} \rangle| \\
&\leq (\pi/6) |s-1/4|  | R_{s(1-s)} |  \lVert \mathcal{U}_1(z,s+1,1/2,\chi) \rVert  \;  
\lVert \mathcal{U}_{1-n}(z,s+2,-1/2,\overline{\chi}) \rVert\\
&\leq  \frac{ (\pi/6) |s-1/4| }{\text{distance}(s(1-s),\text{spectrum}(\Delta_{\frac{1}{2}}))}  \lVert \mathcal{U}_1(z,s+1,1/2,\chi) \rVert  \;  
\lVert \mathcal{U}_{1-n}(z,s+2,-1/2,\overline{\chi}) \rVert,
\end{align*}
where
$R_{s(1-s)} = (\Delta_{\frac{1}{2}} + s(1-s))^{-1}$
is the resolvent of 
$\Delta_{\frac{1}{2}}$.

For $|t|>1$ we see that $|s-1/4| \leq 1.07 |t|$ and 
as in \cite{gs},
\bee
\text{dist}(s(1-s),\text{spectrum}(\Delta_{\frac{1}{2}}))\geq|t(2\sigma-1)|.
\eee
Also we have 
$\zeta(1+\delta)/\delta \leq \zeta^2(1+\delta)$, so 
Propositions \ref{prop:P1} and \ref{prop:norm1-n} give us
\bee
|I_{1,n}(s,s+2)|
\leq 0.414\tau((1-24n)^2) \zeta^2(1+\delta) |n_{\chi}|^{-\frac{7}{4}}.
\eee

  For $|t|\leq 1$ we see that $\re(s(1-s))\leq 5/4$.
  Since $\lambda_1(1/2) > 3.86$ (\cite[Corollary 5.3]{AAimrn}) and $\lambda_0(1/2) =3/16$
we have
\bee
\text{distance}(s(1-s),\text{spectrum}(\Delta_{\frac{1}{2}})) 
= \la s(1-s)-\tfrac{3}{16} \ra
= \la s-1/4\ra \la s-3/4 \ra  
\geq \tfrac{1}{8} |s-1/4|.
\eee
Since $\zeta(1+\delta) \geq \zeta(\frac{5}{4}) \geq  4.59$, for these values of $t$ we have
\bee
|I_{1,n}(s,s+2)|
\leq 0.674 \tau((1-24n)^2) \zeta^2(1+\delta)|n_{\chi}|^{-\frac{7}{4}}. \qedhere
\eee 
\end{proof}

To obtain a bound for the $\Gamma$ functions appearing in \eqref{eq:absZexpr},
we use the following lemma.
\begin{lemma}[\cite{rademacherbook}, \S 34, Thm.  A]
\label{lem:radGammaQ}
Let $0 \leq c \leq 1$.Then for $\re(s) \geq (1-c)/2$
we have
\bee
\la \frac{\Gamma(s+c)}{\Gamma(s)} \ra \leq |s|^{c}.
\eee 
\end{lemma}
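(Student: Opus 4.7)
The plan is to apply the Phragm\'en-Lindel\"of principle to the function
\[
F(s) := \frac{\Gamma(s+c)}{s^{c}\,\Gamma(s)}
\]
(with the principal branch of $s^{c}$) on the closed half-plane $\Omega := \{\re(s) \geq (1-c)/2\}$. Because $0 \leq c \leq 1$, the poles of $\Gamma(s+c)$ at $s = -c, -c-1, \ldots$ all lie strictly to the left of $\Omega$, and $\Gamma(s)$ has no zeros, so $F$ is holomorphic on $\Omega$. Stirling's formula shows that $F(s) \to 1$ as $|s|\to\infty$ inside any sector $|\arg s| \leq \pi/2 - \epsilon$ and gives a polynomial growth bound on $|F|$ throughout $\Omega$ that is admissible for Phragm\'en-Lindel\"of in a half-plane. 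It therefore suffices to prove $|F(s)|\leq 1$ on the boundary line $\re(s) = (1-c)/2$.

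On this boundary line, $s + c = (1+c)/2 + i\im(s) = 1 - \bar{s}$. Applying the reflection formula $\Gamma(z)\Gamma(1-z) = \pi/\sin(\pi z)$ at $z = \bar{s}$ gives $\Gamma(s+c)\,\overline{\Gamma(s)} = \pi/\sin(\pi\bar{s})$, and taking absolute values yields the identity
\[
|\Gamma(s+c)|\cdot|\Gamma(s)| = \frac{\pi}{|\sin(\pi s)|}.
\]
The target inequality $|\Gamma(s+c)/\Gamma(s)| \leq |s|^{c}$ is then equivalent to
\[
\pi \leq |s|^{c}\,|\sin(\pi s)|\,|\Gamma(s)|^{2} \quad\text{on } \re(s) = (1-c)/2.
\]
This can be confirmed by combining the closed-form identities $|\Gamma(1/2+i\tau)|^{2} = \pi/\cosh(\pi\tau)$ and $|\Gamma(i\tau)|^{2} = \pi/(\tau\sinh(\pi\tau))$, which give equality at the endpoints $c=0$ and $c=1$, with Stirling's asymptotic $|\sin(\pi s)|\,|\Gamma(s)|^{2} \sim \pi|\tau|^{2\sigma-1}$ as $|\tau|\to\infty$ (so that the right-hand side is $\sim \pi|\tau|^{c+2\sigma-1} = \pi$ on our line), and a monotonicity check in $c\in[0,1]$ via the Weierstrass product for $1/\Gamma$.

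The main obstacle is the uniform verification of this boundary inequality: it is tight at both endpoints and asymptotically in $|\tau|$, leaving essentially no slack. An alternative route, following Rademacher's original treatment in \S 34 of \cite{rademacherbook}, begins with the real-variable estimate $\Gamma(\sigma+c) \leq \sigma^{c}\Gamma(\sigma)$, which is immediate from log-convexity of $\Gamma$ via $\log\Gamma(\sigma+c) \leq (1-c)\log\Gamma(\sigma) + c\log\Gamma(\sigma+1) = \log\Gamma(\sigma) + c\log\sigma$, and then extends to complex $s$ using Binet's second formula
\[
\log\Gamma(s) = (s-1/2)\log s - s + (1/2)\log(2\pi) + 2\int_{0}^{\infty} \frac{\arctan(t/s)}{e^{2\pi t}-1}\,dt,
\]
Taylor-expanding the polynomial part of $\log\Gamma(s+c) - \log\Gamma(s) - c\log s$ and bounding the remainder integral. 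This sidesteps the delicate boundary analysis at the cost of more bookkeeping in the error terms.
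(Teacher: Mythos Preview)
The paper does not prove this lemma; it is quoted from Rademacher's book (\S 34, Theorem~A) and used as a black box. There is therefore no in-paper argument to compare against.

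On the substance of your proposal: the Phragm\'en--Lindel\"of setup is correct, and the reflection-formula reduction on the boundary line $\re(s)=(1-c)/2$ via $s+c = 1-\bar s$ is a nice observation. But as written this is not a proof. You reduce everything to the boundary inequality
\[
\pi \leq |s|^{c}\,|\sin(\pi s)|\,|\Gamma(s)|^{2} \qquad \text{on } \re(s) = \tfrac{1-c}{2},
\]
and then, as you yourself acknowledge (``the main obstacle is the uniform verification of this boundary inequality \ldots\ leaving essentially no slack''), do not verify it. The phrase ``can be confirmed by \ldots\ a monotonicity check in $c$ via the Weierstrass product'' is exactly where the content lies, and that check is never carried out. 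The alternative route via Binet's second formula is likewise only a pointer: you say ``Taylor-expanding \ldots\ and bounding the remainder integral'' without doing either. Both branches of the proposal are outlines with self-identified gaps, not proofs.
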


\begin{proposition}
\label{prop:gammabounds}
Let  $s=\sigma+it = 1/2+\delta/2+it$ with $0<\delta\leq 1/4$.
Then
$$\frac{\la \Gamma\(s+\frac{9}{4}\)\Gamma\(s-\frac{1}{4}\)\ra}{|\Gamma(2s+1)|}
\leq 
5.84(1+|t|)^{\frac{1}{2}}.$$
\end{proposition}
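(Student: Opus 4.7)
The plan is to combine the Legendre duplication formula with Lemma \ref{lem:radGammaQ} so as to reduce the three Gamma functions in the ratio to a single $\Gamma/\Gamma$ to which Rademacher's bound applies. First I use the recursion $\Gamma(z+1) = z\Gamma(z)$ to peel off algebraic factors: $\Gamma(s+9/4) = (s+5/4)(s+1/4)\Gamma(s+1/4)$ and $\Gamma(s-1/4) = \Gamma(s+3/4)/(s-1/4)$. The resulting product $\Gamma(s+1/4)\Gamma(s+3/4)$ then collapses, via duplication with $z = s+1/4$, to $2^{1/2-2s}\sqrt{\pi}\,\Gamma(2s+1/2)$.

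Second, since $\Gamma(2s+1) = 2s\,\Gamma(2s)$, applying Lemma \ref{lem:radGammaQ} with $c = 1/2$ to $z = 2s$ gives $|\Gamma(2s+1/2)/\Gamma(2s)| \leq |2s|^{1/2}$, hence $|\Gamma(2s+1/2)/\Gamma(2s+1)| \leq 1/|2s|^{1/2}$. Collecting the algebraic factors and the powers of $2$ yields the explicit bound
$$\frac{|\Gamma(s+9/4)\Gamma(s-1/4)|}{|\Gamma(2s+1)|} \;\leq\; \frac{\sqrt{\pi}\,|s+5/4|\,|s+1/4|}{2^{2\sigma}\,|s-1/4|\,|s|^{1/2}}.$$
It remains to analyze this rational function of $s$ on the strip $\sigma \in (1/2, 5/8]$, $t \in \R$. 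For $|t|$ large, the numerator grows like $|t|^2$ and the denominator like $|t|^{3/2}$, producing growth of order $|t|^{1/2}$ with leading constant at most $\sqrt{\pi}/2^{2\sigma} \leq \sqrt{\pi}/2 \approx 0.886$, which is well under the target.

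The main obstacle is that the proposition is essentially saturated at $s = 1/2$: the exact value $\Gamma(11/4)\Gamma(1/4)/\Gamma(2)\approx 5.83$ leaves virtually no slack beneath the constant $5.84$. The Rademacher estimate $|\Gamma(2s+1/2)/\Gamma(2s)| \leq |2s|^{1/2}$ is asymptotically sharp but loose at $|2s|=1$ (true value $\sqrt{\pi}/2$, bound $1$), so the naive route overshoots by a factor of roughly $2/\sqrt{\pi}\approx 1.13$. To close this gap, I would split into cases $|t|\leq T_{0}$ and $|t|>T_{0}$: for the unbounded range I use the algebraic bound above, which stays comfortably below $5.84(1+|t|)^{1/2}$; for the bounded range I verify the inequality directly, either by continuity combined with explicit numerical values of $\Gamma$ at the quarter-integer points that appear, or by appealing to a sharper gamma-ratio estimate (such as Stirling with explicit error) in the small region where Rademacher is not tight.
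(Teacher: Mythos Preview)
Your proposal is correct and takes essentially the same route as the paper: duplication formula, functional equation to peel off algebraic factors, Rademacher's Lemma~\ref{lem:radGammaQ} for the remaining Gamma ratio, and a case split on $|t|$. The only cosmetic difference is that the paper peels one level from $\Gamma(s+9/4)$ and applies Rademacher twice with $c=1/4$ (to $\Gamma(s+5/4)/\Gamma(s+1)$ and $\Gamma(s+3/4)/\Gamma(s+1/2)$), whereas you peel two levels and apply it once with $c=1/2$; for $|t|\le 1$ the paper likewise bypasses Rademacher and simply evaluates at $s=1/2$, obtaining $\Gamma(11/4)\Gamma(1/4)/\Gamma(2)\approx 5.83$.
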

\begin{proof}
Apply the duplication formula
to obtain
$$ |\Gamma(2s+1)|
=
|\Gamma(s+1/2) \Gamma(s+1)| \pi^{-\frac{1}{2}} 4^{\sigma}.$$
Use the functional equation to obtain
\bee
\Gamma\(s-\tfrac{1}{4}\) = \(s-\tfrac{1}{4}\)^{-1} \Gamma\(s+\tfrac{3}{4}\),
\eee
\bee
\Gamma\(s+\tfrac{9}{4}\) = \(s+\tfrac{5}{4}\) \Gamma\(s+\tfrac{5}{4}\).
\eee
Then by Lemma \ref{lem:radGammaQ},
\begin{align*}
\frac{\la \Gamma(s+\frac{9}{4})\Gamma\(s-\frac{1}{4}\)\ra}{|\Gamma(2s+1)|}
&=
\pi^{\frac{1}{2}} 4^{-\sigma}
\frac{\la s+\frac{5}{4} \ra \la \Gamma\(s+\frac{5}{4}\)  \Gamma\(s+\frac{3}{4}\)\ra}
{\la s-\frac{1}{4} \ra  \la\Gamma(s+1)| \Gamma\(s+\frac{1}{2}\)\ra}\\
 &\leq \pi^{\frac{1}{2}} 4^{-\frac{1}{2}}
 \frac{\la s+\frac{5}{4}\ra  |s+1|^{\frac{1}{4}} \la s+\frac{1}{2}\ra^{\frac{1}{4}} }{\la s-\frac{1}{4} \ra}.
 \end{align*}
 If $|t|\leq 1$ we see that 
$\frac{\la \Gamma\(s+\frac{9}{4}\)\Gamma\(s-\frac{1}{4}\)\ra}{|\Gamma(2s+1)|}\leq 
\frac{\la \Gamma\(\frac{11}{4}\)\Gamma\(\frac{1}{4}\)\ra}{|\Gamma(2)|}\leq5.84$.
If $|t|\geq 1$ we have  
\bee
\frac{\la \Gamma\(s+\frac{9}{4}\)\Gamma\(s-\frac{1}{4}\) \ra}{|\Gamma(2s+1)|}
\leq \pi^{\frac{1}{2}} 4^{-\frac{1}{2}}
 \frac{\la s+\frac{5}{4}\ra^{\frac{3}{2}}}{\la s-\frac{1}{4}\ra} \\
 \leq\pi^{\frac{1}{2}} 4^{-\frac{1}{2}}
\la 1 + \frac{6}{4s-1} \ra^{\frac{3}{2}} \la s-\tfrac{1}{4}\ra^{\frac{1}{2}}\\
 \leq 2.51|t|^{\frac{1}{2}}.
\eee The proposition follows. 
 \end{proof}
\noindent {\it Proof of Theorem \ref{th:KZbound}.}
The theorem follows from Lemma \ref{lem:Qbound}, \eqref{eq:absZexpr} and Propositions 
\ref{prop:Ibound} and \ref{prop:gammabounds}. \hfill $\square$

\section{Proof of Theorem \ref{thm:KloostermanBound}}
\label{sec:proofOfMainTh}
We
use Perron's formula as in
\cite[\S 17]{davenport}. Let $f(s) = Z_{1,n}\(\frac{1+s}{2}\)$.
We see that
\be
\sum_{c\leq x} \frac{S(1,n,c,\chi)}{c}
=
\frac{1}{2\pi i} \int_{v-i\infty}^{v+i\infty} f(s) \frac{x^s}{s} ds,
\ee
where $v>1/2$.
Now for $T>0$ and $x\in\mathbb{Z}+1/2$ we have
\begin{multline}
\label{eq:SumSInt}
 \la \sum_{c\leq x} \frac{S(1,n,c,\chi)}{c}
- \frac{1}{2\pi i} \int_{v-iT}^{v+iT} f(s)  \frac{x^s}{s} ds \ra 
=
\frac{1}{2\pi} \la\int_{v-i\infty}^{v+i\infty} f(s) \frac{x^s}{s} ds
- \int_{v-iT}^{v+iT} f(s) \frac{x^s}{s} ds \ra \\
< \sum_{c=1}^{\infty} \frac{|S(1,n,c,\chi)|}{c} \(\frac{x}{c}\)^v 
\min\(1,T^{-1} \la \log\frac{x}{c}\ra^{-1}\).
\end{multline}
Let $v = 1/2+\delta$ where $\delta$ is as in Theorem \ref{thm:KloostermanBound}. Then
\bee
\sum_{c=1}^{\infty} \frac{|S(1,n,c,\chi)|}{c} \(\frac{x}{c}\)^v 
\min(1,T^{-1} \la \log\frac{x}{c}\ra^{-1})\\
 \leq \frac{x^{\frac{1}{2}+\delta}}{T}
 \sum_{c=1}^{\infty} \frac{|S(1,n,c,\chi)|}{c^{\frac{3}{2}+\delta}}
\la \log\frac{x}{c}\ra^{-1}.
\eee
Now we split the sum into the ranges $c\leq \frac{3}{4} x$, $\frac{3}{4} x < c<x$, 
$x<c<\frac{5}{4} x$, and $c\geq \frac{5}{4} x$.
Note that
$\frac{1}{\log x-\log c} \leq \frac{x}{x-c}$ when $c<x$.
So for $x\geq 10000$ we have
\begin{align}
\label{eq:34x}
\sum_{c=\left\lfloor \frac{3x}{4}\right\rfloor+1}^{x-\frac{1}{2}} \frac{|S(1,n,c,\chi)|}{c^{\frac{3}{2}+\delta}}
\la \log\frac{x}{c}\ra^{-1} 
& \leq 
\sum_{c=\left\lfloor \frac{3x}{4} \right\rfloor+1}^{x-\frac{1}{2}} \frac{|S(1,n,c,\chi)|}{c^{\frac{3}{2}+\delta}}
\frac{x}{x-c}\\
&\leq \frac{4\ell(\delta)}{3}  \int_{ \frac{3x}{4} }^{x-\frac{1}{2}}
\frac{dt}{x-t}
+\frac{8\ell(\delta)}{3}\nonumber \\ 
&\leq 1.523\ell(\delta)\log x.\nonumber
\end{align}
Similarly, for $x\geq 10000$ we see that
\be
\label{eq:x54}
\sum_{c=x+\frac{1}{2}}^{\left\lceil \frac{5x}{4}\right\rceil -1} \frac{|S(1,n,c,\chi)|}{c^{\frac{3}{2}+\delta}}
\la \log\frac{x}{c}\ra^{-1} 
\leq \ell(\delta) \sum_{c=x+\frac{1}{2}}^{\left\lceil \frac{5x}{4}\right\rceil -1} 
\frac{1}{c-x}
\leq 1.142 \ell(\delta) \log x.
\ee
 We have
\bee
\sum_{c\leq \frac{3x}{4}} \frac{|S(1,n,c,\chi)|}{c^{\frac{3}{2}+\delta}}
\la \log\frac{x}{c}\ra^{-1} 
\leq 
\sum_{c\leq \frac{3x}{4}} \frac{|S(1,n,c,\chi)|}{c^{\frac{3}{2}+\delta}}
\la \log\frac{4}{3} \ra^{-1} \\
 \leq 
3.5\sum_{c\leq \frac{3x}{4}} \frac{\tau(c)}{c^{1+\delta}}
\eee
and 
\bee
\sum_{c\geq \frac{5x}{4}} \frac{|S(1,n,c,\chi)|}{c^{\frac{3}{2}+\delta}}
\la \log\frac{x}{c}\ra^{-1} 
 \leq 
\sum_{c\geq \frac{5x}{4}} \frac{|S(1,n,c,\chi)|}{c^{\frac{3}{2}+\delta}}
\la \log\frac{4}{5}\ra^{-1} 
 \leq 
4.5\sum_{c\geq \frac{5x}{4}} \frac{\tau(c)}{c^{1+\delta}}.
\eee
By  \eqref{prop:ZetaSq} we obtain
\be
\label{eq:34x54x}
\sum_{c\leq \frac{3x}{4}} +
\sum_{c\geq \frac{5x}{4}} \frac{|S(1,n,c,\chi)|}{c^{\frac{3}{2}+\delta}}
\la \log\frac{x}{c}\ra^{-1} 
 \leq 
4.5\sum_{c=1}^{\infty} \frac{\tau(c)}{c^{1+\delta}}=4.5\zeta^{2}(1+\delta).
\ee
 Therefore, by \eqref{eq:SumSInt},
\eqref{eq:34x}, \eqref{eq:x54}, and \eqref{eq:34x54x} we obtain
 \be
\label{eq:KSumIntBound}
\la \sum_{c\leq x} \frac{S(1,n,c,\chi)}{c} \ra
\leq  \la \int_{\frac{1}{2}+\delta-iT}^{\frac{1}{2}+\delta+iT}f(s)\frac{x^s}{s} ds \ra
+ \(4.5\zeta^2(1+\delta) + 3 \ell(\delta) \log x \) \frac{x^{\frac{1}{2}+\delta}}{T}.
\ee

From \cite[(3.2)]{gs} we see that $Z_{m,n}\(\frac{1+s}{2}\)$  is holomorphic for $\re(s)>0$ (the only possible pole at $s=1/2$ does not arise since $n\leq 0$).
Thus
\be
\label{eq:rectangleIntegral}
\int_{\partial E} Z_{m,n}\(\frac{1+s}{2}\) \frac{x^s}{s} ds = 0,
\ee
where $E$ is the rectangle $[\delta,1/2+\delta]\times[-T,T]$.
We obtain a bound for  $Z_{1,n}\(\frac{1+\delta+it}{2}\)$ using Theorem \ref{th:KZbound} and a bound 
for  $Z_{1,n}\(\frac{\frac{3}{2}+\delta+it}{2}\)$ using the Weil bound.
We require the Phragm\'en-Lindel\"of principle for a strip.
 \begin{proposition}[\cite{iwkow}, Thm. 5.53]
 Let $f$ be a function holomorphic on an open neighborhood of a strip $a\leq \sigma \leq b$, 
 for some real numbers $a < b$, such that $|f(s)| \ll \exp(|s|^{A})$ for 
 some $A\geq 0$ and $a\leq \sigma \leq b$.
Assume that
\begin{align*}
& |f(a+it)| \leq M_{a}(1+|t|)^{\alpha}, \\
& |f(b+it)| \leq M_{b}(1+|t|)^{\beta} 
\end{align*}
for $t \in \R$. Then
\bee
|f(\sigma+it)| \leq M_{a}^{d(\sigma)} M_{b}^{1-d(\sigma)} 
(1+|t|)^{\alpha d(\sigma)+\beta(1-d(\sigma))}
\eee
for all $s$ in the strip, where $d$ is the linear function such that $d(a) =1$, $d(b) = 0$.
\end{proposition}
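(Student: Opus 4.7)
The plan is to execute the classical Phragm\'en--Lindel\"of argument for a strip of finite width in three stages: normalize the two vertical-boundary constants $M_a$ and $M_b$; analytically interpolate the polynomial factors $(1+|t|)^{\alpha}$ and $(1+|t|)^{\beta}$; and then apply the maximum modulus principle on a large rectangle, using a decay-inducing auxiliary exponential to absorb the subexponential growth $|f(s)|\ll\exp(|s|^A)$.

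The first step is to reduce to $M_a = M_b = 1$. Since $d(s) \defeq (b-s)/(b-a)$ is linear and holomorphic in $s$, with $d(a)=1$, $d(b)=0$, and $\re d(s) = d(\re s)$, and since $M_a,M_b>0$, the functions $M_a^{d(s)}$ and $M_b^{1-d(s)}$ are entire, nonvanishing, with moduli $M_a^{d(\sigma)}$ and $M_b^{1-d(\sigma)}$ respectively. Replacing $f$ by
\[
F(s) \defeq f(s)\,M_a^{-d(s)}\,M_b^{-(1-d(s))}
\]
preserves holomorphy and subexponential growth on the strip and yields the bounds $|F(a+it)| \leq (1+|t|)^{\alpha}$, $|F(b+it)| \leq (1+|t|)^{\beta}$; the target becomes $|F(\sigma+it)| \leq (1+|t|)^{\gamma(\sigma)}$, where $\gamma(s) \defeq \alpha d(s) + \beta(1-d(s))$ is linear.

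The second step is to cancel the polynomial factors on the boundary by an analytic function whose modulus mimics $(1+|t|)^{\gamma(\sigma)}$. I would work in the upper half-strip $t \geq 0$ and introduce the holomorphic auxiliary $W(s) \defeq (c-is)^{\gamma(s)}$, with $c>b$ chosen so that $c-is$ avoids the negative real axis throughout; the principal branch of the logarithm is used. Writing $\gamma(\sigma+it) = \gamma(\sigma) + it(\beta-\alpha)/(b-a)$ and noting that $\arg(c-is)$ is bounded in this region, one obtains $|W(\sigma+it)| = |c-is|^{\gamma(\sigma)} e^{O(1)} \asymp (1+t)^{\gamma(\sigma)}$ uniformly. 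Hence $G \defeq F/W$ is holomorphic on the half-strip, with $|G|$ bounded by an absolute constant on both vertical edges. The lower half-strip is handled symmetrically by replacing $c-is$ with $c+is$.

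The final step is the three-lines argument for $G$, which inherits the subexponential growth $|G(s)|\ll\exp(|s|^{A'})$ from $f$. To invoke the maximum modulus principle on a rectangle $[a,b]\times[0,T]$, I would multiply $G$ by an auxiliary factor $\exp(\varepsilon h(s))$, where $h$ is a suitable rotated power of $s$ engineered so that $\re h(s) \to -\infty$ faster than $|s|^{A'}$ on the top edge; when necessary the strip is first subdivided into narrow substrips on which such an $h$ exists. The top edge contribution then vanishes as $T \to \infty$, the vertical edges impose a uniform bound, and letting $\varepsilon\to 0^+$ yields $|G| \leq C$ throughout the half-strip. Multiplying back by $W$, combining the upper and lower half-strips, and finally eliminating any residual absolute constant by a limiting argument (for instance replacing $W$ by a one-parameter family of auxiliary functions and letting the parameter tend to zero) recovers the sharp inequality claimed. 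The principal technical obstacle is the uniform modulus estimate $|W(\sigma+it)|\asymp (1+t)^{\gamma(\sigma)}$ for the linearly varying exponent $\gamma(s)$; the rest is a standard self-contained application of the maximum principle.
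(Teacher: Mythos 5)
The paper does not prove this proposition; it is quoted verbatim from Iwaniec--Kowalski (cited as \cite{iwkow}, Thm.\ 5.53) and invoked as a black box, so there is no in-paper proof to compare your attempt against. Your outline is nonetheless the standard Phragm\'en--Lindel\"of argument, and the first two stages are sound: normalizing $M_a,M_b$ via the entire nonvanishing weight $M_a^{d(s)}M_b^{1-d(s)}$, and the key modulus computation $\re\bigl(\gamma(s)\log(c-is)\bigr)=\gamma(\sigma)\log|c-is|-t\kappa\arg(c-is)$ with $\kappa=(\beta-\alpha)/(b-a)$, which gives $|W(\sigma+it)|\asymp(1+|t|)^{\gamma(\sigma)}$ uniformly on the half-strip. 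The decay factor $\exp(-\varepsilon(-is)^{B})$ with $B>A$ is indeed eventually negative in real part on the vertical edges and $\to-\infty$ on the top edge, so the truncated maximum-modulus argument, the $\varepsilon\to0$ limit, and the upper/lower half-strip split all work.

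The genuine gap is in the last normalization. With $W(s)=(c-is)^{\gamma(s)}$ the boundary ratios $|F|/|W|$ are bounded but not by $1$: both $|c-is|/(1+|t|)$ and $\exp(-t\kappa\arg(c-is))$ contribute nontrivial constants, so the argument as written yields $|f(\sigma+it)|\le C\,M_a^{d(\sigma)}M_b^{1-d(\sigma)}(1+|t|)^{\gamma(\sigma)}$ with some $C>1$, not the constant $1$ asserted in the statement (which the paper really uses, since Proposition \ref{prop:phagmenfbound} carries explicit numerical constants into Theorem \ref{thm:KloostermanBound}). Your remedy --- ``a one-parameter family of auxiliary functions and letting the parameter tend to zero'' --- is not an argument: deforming $c$ does not make those ratios tend to $1$ uniformly in $t$, and the $f\mapsto f^{n}$ trick fails because the loss scales exponentially with $\alpha,\beta$, which get multiplied by $n$. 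To recover the sharp constant one needs a more careful auxiliary: e.g.\ (for $\alpha,\beta\ge0$) take $W=\exp(\alpha L_a+\beta L_b)$ where $L_a,L_b$ are holomorphic on the strip, $\re L_a$ is the bounded harmonic solution of the Dirichlet problem with data $\log(1+|t|)$ on $\sigma=a$ and $0$ on $\sigma=b$ (and $L_b$ symmetrically), so the boundary match is exact, and then observe that $d(\sigma)\log(1+|t|)$ is superharmonic with the same boundary values as the harmonic $\re L_a$, giving $\re L_a\le d(\sigma)\log(1+|t|)$ inside the strip. That comparison is the missing ingredient in your sketch.
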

\begin{proposition}
\label{prop:phagmenfbound}
Let $n\leq 0$ and $f(s) = Z_{1,n}(\frac{1+s}{2})$.
For $\delta \leq \sigma \leq 1/2+\delta$ with $0< \delta \leq 1/4$ we have
\bee
|f(\sigma+it)| \leq 189.91  \zeta^{2}(1+\delta)
 \tau((1-24n)^2)  |n_{\chi}|^{\frac{1}{4}} (1+|t/2|)^{-\sigma+\frac{1}{2}+\delta}. 
 \eee
\end{proposition}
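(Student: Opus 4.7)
The plan is to apply the Phragm\'en--Lindel\"of principle to $h(w) := Z_{1,n}(w)$ on the vertical strip $\tfrac12+\tfrac{\delta}{2}\leq \re(w)\leq \tfrac34+\tfrac{\delta}{2}$, and then convert back to $f$ via the change of variables $w=(1+s)/2$. Holomorphy of $h$ throughout this strip was observed just above \eqref{eq:rectangleIntegral} (no pole arises since $n\leq 0$), and the standard growth bound $|h(w)|\ll \exp(|w|^{A})$ in vertical strips that is needed to invoke Phragm\'en--Lindel\"of follows from Lemma \ref{lem:innerprod} together with Stirling's estimates for the $\Gamma$-factors.

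On the left edge $\re(w)=\tfrac12+\tfrac{\delta}{2}$, Theorem \ref{th:KZbound} supplies
\[
|h(w)|\leq M_{a}(1+|\im w|)^{1/2},\qquad M_{a}:=189.91\,\zeta^{2}(1+\delta)\,\tau((1-24n)^{2})\,|n_{\chi}|^{1/4}.
\]
On the right edge $\re(w)=\tfrac34+\tfrac{\delta}{2}$ I will use absolute convergence of the defining Dirichlet series: combining Lehmer's bound \eqref{eq:lehmerAc} with \eqref{eq:A-c-n-S} gives $|S(1,n,c,\chi)|=|A_{c}(1-n)|\leq \tau(c)\,c^{1/2}$ (since $n\leq 0$, so $1-n\geq 1$), whence
\[
|h(w)|\leq \sum_{c>0}\frac{\tau(c)}{c^{1+\delta}}=\zeta^{2}(1+\delta)=:M_{b}
\]
by \eqref{prop:ZetaSq}. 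Crucially $M_{b}\leq M_{a}$, so both edge bounds are dominated by $M_{a}$.

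With $\alpha=\tfrac12$ on the left edge and $\beta=0$ on the right, the Phragm\'en--Lindel\"of principle produces
\[
|h(\sigma'+it')|\leq M_{a}^{d(\sigma')}M_{b}^{1-d(\sigma')}(1+|t'|)^{d(\sigma')/2}\leq M_{a}\,(1+|t'|)^{d(\sigma')/2},
\]
where $d$ is the linear function with $d(\tfrac12+\tfrac{\delta}{2})=1$ and $d(\tfrac34+\tfrac{\delta}{2})=0$. Setting $w=(1+s)/2$, so that $\sigma'=(1+\sigma)/2$ and $t'=t/2$, a short computation gives $d(\sigma')/2=\tfrac12+\delta-\sigma$, which is precisely the exponent $-\sigma+\tfrac12+\delta$ on the factor $(1+|t/2|)$ claimed in the proposition. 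The only subtle point is this final bookkeeping, which converts $(1+|t|)$ into the sharper $(1+|t/2|)$ via the halving of imaginary parts under the substitution; apart from this, the argument is routine.
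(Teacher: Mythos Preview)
Your proof is correct and follows essentially the same route as the paper: bound $Z_{1,n}$ on the two edges using Theorem~\ref{th:KZbound} and the Lehmer/Weil-type estimate \eqref{eq:lehmerAc} (via \eqref{prop:ZetaSq}), then interpolate with Phragm\'en--Lindel\"of. The only cosmetic difference is that the paper applies Phragm\'en--Lindel\"of directly to $f(s)=Z_{1,n}((1+s)/2)$ on the strip $\delta\leq\sigma\leq\tfrac12+\delta$, whereas you work with $h(w)=Z_{1,n}(w)$ and substitute $w=(1+s)/2$ at the end; the two are related by a linear change of variable and yield the identical exponent $-\sigma+\tfrac12+\delta$.
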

\begin{proof}
By Theorem \ref{th:KZbound}, we have 
\be
\label{eq:leftside}
|f(\delta+it)| \leq 
189.91 \zeta^{2}(1+\delta) \tau((1-24n)^2) |n_{\chi}|^{\frac{1}{4}} (1+|t/2|)^{\frac{1}{2}}.
\ee
Also, by \eqref{prop:ZetaSq}
\bee
\la f\(\frac{1}{2}+\delta+it\)\ra=\la\sum_{c=1}^{\infty} \frac{S(1,n,c,\chi)}{c^{\frac{3}{2}+\delta+it}} \ra 
\leq \sum_{c=1}^{\infty} \frac{c^{\frac{1}{2}}\tau(c)}{c^{\frac{3}{2}+\delta} } =\zeta^{2}(1+\delta).
\eee
Note that the line $d$ such that 
$d(\delta) = 1$ 
and 
$d(1/2+\delta) = 0$
is 
$d(\sigma) = -2\sigma+1+2\delta$.
By Phragm\'en-Lindel\"of, for $\delta \leq \sigma \leq 1/2+\delta$ 
we have
\begin{align*}
|f(\sigma+it)| &\leq \(189.91\zeta^{2}\(1+\delta\)  
\tau\(\(1-24n\)^2\)  |n_{\chi}|^{\frac{1}{4}}\)^{-2\sigma+1+2\delta}
\(\zeta^{2}\(1+\delta\)\)^{2\sigma-2\delta}\(1+|t/2|\)^{-\sigma+\frac{1}{2}+\delta}\\
&=\zeta^{2}\(1+\delta\)
 \(189.91 \tau\(\(1-24n\)^2\)  |n_{\chi}|^{\frac{1}{4}}\)^{-2\sigma+1+2\delta}\(1+|t/2|\)^{-\sigma+\frac{1}{2}+\delta}. \qedhere
\end{align*}
\end{proof}

We are now ready to prove Theorem \ref{thm:KloostermanBound}.
  \begin{proof}[Proof of Theorem \ref{thm:KloostermanBound}]
 For  $x< 10000$ this follows from  $\la \sum_{c\leq x} \frac{A_c(n)}{c} \ra \leq x$. 
  Let $x\geq 10000$ and set $T= x^{\frac{1}{3}}$. By Proposition \ref{prop:phagmenfbound} we have
\begin{align*}
&\la \int_{\frac{1}{2}+\delta+iT}^{\delta+iT} f(s) \frac{x^s}{s} ds \ra
= \la \int_{\delta}^{\frac{1}{2}+\delta} f(\sigma+iT) \frac{x^{\sigma+iT}}{\sigma+iT} d\sigma \ra\\
&\leq  189.91  \zeta^{2}(1+\delta) \tau((1-24n)^2)   |n_{\chi}|^{\frac{1}{4}}  (1+T/2)^{\frac{1}{2}+\delta}
\int_{\delta}^{\frac{1}{2}+\delta}
  (1+T/2)^{-\sigma} \frac{x^{\sigma}}{|\sigma+iT|} d\sigma\\
           &\leq \frac{189.91   \zeta^{2}(1+\delta)  |n_{\chi}|^{\frac{1}{4}}\tau((1-24n)^2) (1+T/2)^{\frac{1}{2}+\delta}}{T}    \cdot
     \frac{\(\frac{x}{1+T/2}\)^{\frac{1}{2}+\delta}-\(\frac{x}{ 1+T/2}\)^{\delta}}{\log\(\frac{x}{ 1+T/2}\)}.
        \end{align*}
     Disregarding the negative term and using the estimate $\log\(\frac{x}{1+T/2}\) \geq 6.74$ gives
    \be
    \label{eq:TopBottomIntegral}
    \la \int_{\frac{1}{2}+\delta+iT}^{\delta+iT} f(s) \frac{x^s}{s} ds \ra \leq
    28.18 \zeta^{2}(1+\delta) \tau((1-24n)^2) |n_{\chi}|^{\frac{1}{4}}  x^{\frac{1}{6}+\delta} .
    \ee
    The same bound holds for the bottom of the rectangle.
To estimate the integral  over the left side of the rectangle we use \eqref{eq:leftside}
 to see that
$\int_{\delta+iT}^{\delta-iT} f(s) \frac{x^s}{s} ds$
is bounded by
\be
\label{eq:rectIntLeft}
  189.91 \zeta^2(1+\delta)  \tau((1-24n)^2) |n_{\chi}|^{\frac{1}{4}} x^{\delta}
  \( 2\int_{2}^{T}    \frac{ (1+t/2)^{\frac{1}{2}}}{|\delta+it|} dt
  +
  2\int_{0}^{2}    \frac{ (1+t/2)^{\frac{1}{2}}}{|\delta+it|} dt\).
\ee
Using the inequality 
    $\int_{2}^{T}   \(\frac{1}{2t} + \frac{1}{t^2}\)^{\frac{1}{2}} dt
     \leq  \int_{2}^{T}   \frac{1}{(2t)^{\frac{1}{2}}} dt$
    we see that the first term in the expansion of \eqref{eq:rectIntLeft} is bounded by
\be
537.15 \zeta^{2}(1+\delta) \tau((1-24n)^2) |n_{\chi}|^{\frac{1}{4}} x^{ \delta} T^{\frac{1}{2}}.
\ee
 Note that 
 \bee
 \int_{0}^{2}  \(\frac{1+|t|/2}{\delta^2+t^2}\)^{\frac{1}{2}} \, dt
    \leq \sqrt{2} \int_{0}^{2}  \(\frac{1}{\delta^2+t^2}\)^{\frac{1}{2}} \, dt
    =\frac{1}{\sqrt{2}} \log \(\delta ^2+4 \sqrt{\delta ^2+4}+8\) - \sqrt{2} \log \(\delta \).
\eee
Since $0< \delta \leq 1/4$ we have 
$\frac{1}{\sqrt{2}} \log \(\delta ^2+4 \sqrt{\delta ^2+4}+8\)\leq 1.77$, so
$$ \int_{0}^{2}  \(\frac{1+|t|/2}{\delta^2+t^2}\)^{\frac{1}{2}} \, dt
    \leq  2.7 |\log \delta|.$$
Thus, the second term in the expansion of \eqref{eq:rectIntLeft} is bounded by 
\be
1025.52 \zeta^{2}(1+\delta) \tau((1-24n)^2) |n_{\chi}|^{\frac{1}{4}} x^{\delta}|\log\delta|.
\ee
Thus (recall that $x\geq 10000$ and $0<\delta\leq 1/4$),
\begin{align}
\label{eq:LeftIntegral}
\nonumber
\la \int_{\delta+iT}^{\delta-iT} f(s) \frac{x^s}{s} ds \ra
   &\leq  \zeta^{2}(1+\delta) \tau((1-24n)^2) |n_{\chi}|^{\frac{1}{4}}\(
   537.15 x^{\frac{1}{6}+\delta} 
    + 1025.52 x^{\delta}|\log \delta|\)\\
      &\leq 608.42 \zeta^{2}(1+\delta) \tau((1-24n)^2)  |\log \delta| |n_{\chi}|^{\frac{1}{4}} x^{\frac{1}{6}+\delta}.
   \end{align}
Hence, using  \eqref{eq:rectangleIntegral},\eqref{eq:TopBottomIntegral}, and  \eqref{eq:LeftIntegral}
we get
   \be
   \label{eq:intsum}
 \la \int_{\frac{1}{2}+\delta-iT}^{\frac{1}{2}+\delta+iT} f(s) \frac{x^s}{s} ds \ra
\leq 649.08 \zeta^{2}(1+\delta) \tau((1-24n)^2)  |\log\delta|  |n_{\chi}|^{\frac{1}{4}} x^{\frac{1}{6}+\delta}.
\ee
 The result  follows from \eqref{eq:A-c-n-S} and \eqref{eq:KSumIntBound}.
   \end{proof}

\section{Proof of Theorem \ref{thm:sptbound}}
\label{sec:proofOfSptbound}
We begin with a lemma.
\begin{lemma}
\label{lem:intTaylorexp}
Let $y\geq 2$ and $q\leq 1$. Then 
\bee
\int_{y}^{\lambda(n)}\, e^{\frac{\lambda(n)}{t}} t^{-\frac{3}{2}+q}  \, dt
\leq\frac{4}{3-2q} \lambda(n)^{-1} y^{1/2+q}  
 e^{\frac{\lambda(n)}{y}}.
\eee
\end{lemma}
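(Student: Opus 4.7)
The plan is to reduce the stated inequality to a clean normalized form via a change of variables, and then analyze a single-variable difference function.

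First I would substitute $u = \lambda(n)/t$ in the integral, obtaining
\[
\int_{y}^{\lambda(n)} e^{\lambda(n)/t}\, t^{-3/2+q}\, dt \;=\; \lambda(n)^{q-1/2} \int_{1}^{Y} e^{u}\, u^{-1/2-q}\, du,
\]
where $Y \defeq \lambda(n)/y \geq 1$. Setting $\alpha \defeq 1/2 + q$, so that $\frac{4}{3-2q} = \frac{2}{2-\alpha}$, the lemma is equivalent to
\[
\int_{1}^{Y} e^{u}\, u^{-\alpha}\, du \;\leq\; \frac{2}{2-\alpha}\, Y^{-\alpha} e^{Y}, \qquad Y \geq 1,
\]
for $\alpha \in [0, 3/2]$ (which is the relevant range when $q \in [-1/2, 1]$, and in particular when $q$ takes the small nonnegative values used in the application).

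Next, I would introduce the difference
\[
\psi(Y) \defeq \frac{2}{2-\alpha}\, Y^{-\alpha} e^{Y} - \int_{1}^{Y} e^{u}\, u^{-\alpha}\, du
\]
and analyze its sign. A direct differentiation yields
\[
\psi'(Y) \;=\; \frac{\alpha(Y-2)}{2-\alpha}\, Y^{-\alpha-1} e^{Y}.
\]
For $\alpha \in (0,2)$ this is negative on $(1,2)$ and positive on $(2,\infty)$, so $\psi$ is U-shaped with its minimum at $Y = 2$. Combined with $\psi(1) = 2e/(2-\alpha) > 0$, it suffices to verify $\psi(2) \geq 0$.

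For the boundary case, I would bound $\int_{1}^{2} e^{u}\, u^{-\alpha}\, du \leq \int_{1}^{2} e^{u}\, du = e^{2} - e$ (using $u^{-\alpha} \leq 1$ on $[1,2]$ for $\alpha \geq 0$), reducing $\psi(2) \geq 0$ to
\[
\frac{2^{1-\alpha}}{2-\alpha} \;\geq\; 1 - \tfrac{1}{e}.
\]
A short calculus check shows that the left-hand side, viewed as a function of $\alpha$, is minimized at $\alpha_{*} = 2 - 1/\log 2 \approx 0.557$ with value $\tfrac{e \log 2}{2} \approx 0.942$, comfortably larger than $1 - 1/e \approx 0.632$.

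The main obstacle is the boundary verification at the single point $Y = 2$: the constant $\tfrac{2}{2-\alpha}$ is not chosen with much slack, and the tightest case of the inequality occurs at an interior point rather than at an endpoint, so the U-shape of $\psi$ must be established and the minimum estimated carefully. A secondary subtlety is the implicit constraint $\alpha \geq 0$ (that is, $q \geq -1/2$): for $q$ very negative, the asymptotic $\int_{1}^{Y} e^{u} u^{-\alpha}\, du \sim Y^{-\alpha} e^{Y}$ no longer beats the claimed constant, so the lemma as stated is really meant for the mild range of $q$ that actually arises in the Taylor expansion of the Bessel-function series.
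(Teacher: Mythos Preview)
Your argument is correct and is genuinely different from the paper's. The paper expands $e^{\lambda(n)/t}$ as a Taylor series, integrates term by term, and bounds each term $\frac{\lambda(n)^m}{m!}\int_y^{\lambda(n)} t^{-m-3/2+q}\,dt$ against the corresponding term of the exponential series on the right; the factor $\frac{4}{3-2q}$ enters through the comparison $\frac{1}{m+1/2-q}\le \frac{4}{(3-2q)(m+1)}$. Your substitution $u=\lambda(n)/t$ followed by the derivative analysis of $\psi(Y)=\tfrac{2}{2-\alpha}Y^{-\alpha}e^{Y}-\int_1^Y e^{u}u^{-\alpha}\,du$ bypasses the series entirely: the identity $\psi'(Y)=\tfrac{\alpha(Y-2)}{2-\alpha}Y^{-\alpha-1}e^{Y}$ pins down the minimum at $Y=2$ in one line, and the endpoint check is a single numerical inequality. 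This is shorter and more transparent about where the constant $\tfrac{4}{3-2q}=\tfrac{2}{2-\alpha}$ comes from.

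You are also right that the hypothesis ``$q\le 1$'' in the lemma is too permissive: for $q<-\tfrac12$ (equivalently $\alpha<0$) the asymptotic $\int_1^Y e^{u}u^{-\alpha}\,du\sim Y^{-\alpha}e^{Y}$ exceeds the constant $\tfrac{2}{2-\alpha}<1$, so the stated inequality fails for large $Y$. The paper's termwise comparison has the same defect (the inequality $(3-2q)(m+1)\le 4(m+1/2-q)$ reduces to $1\le m$ only when $1+2q\ge 0$). Since the sole application in Section~8 is at $q=1$, this does not affect any result in the paper, but your proof makes the true range $q\in[-\tfrac12,1]$ explicit.
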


\begin{proof}
Using the Taylor expansion for $e^{\frac{\lambda(n)}{t}}$ we have
\begin{align*}
\int_{y}^{\lambda(n)}\, e^{\frac{\lambda(n)}{t}} t^{-\frac{3}{2}+q}  \, dt
       &\leq  \lambda(n)^{-1} y^{1/2+q}    \sum_{m=1}^{\infty}
  \frac{\(\frac{\lambda(n)}{y}\)^{m+1}}{(m+1)!} \frac{4}{3-2q}  \\
      &\leq    \frac{4}{3-2q} \lambda(n)^{-1} y^{1/2+q}  
 e^{\frac{\lambda(n)}{y}}. \qedhere
  \end{align*}
\end{proof}

Next is the proof of Theorem \ref{thm:S}. 
\begin{proof}[Proof of Theorem \ref{thm:S}]
By partial summation and \eqref{eq:AASn}, we have
\be
\label{eq:SnPartialSummation}
S(n) = \sqrt{24\pi} \lambda(n)^{\frac{1}{2}} \(I_{\frac{1}{2}}(\lambda(n)) 
- I_{\frac{1}{2}}\(\frac{\lambda(n)}{2}\) - \int_{2}^{\infty} \(I_{\frac{1}{2}}\(\frac{\lambda(n)}{t}\)\)' 
\sum_{c\leq t} \frac{A_c(n)}{c} dt\).
\ee
Note that the convergence of the integral in \eqref{eq:SnPartialSummation}
follows from Theorem \ref{thm:KloostermanBound}.
From  (5.4) of \cite{AA} we have
$$\la \(I_{\frac{1}{2}}\(\frac{\lambda(n)}{x}\)\)' \ra = \frac{\lambda(n)}{2x^2} \(I_{-\frac{1}{2}}\(\frac{\lambda(n)}{x}\) +
I_{\frac{3}{2}}\(\frac{\lambda(n)}{x}\) \),$$
so
\begin{multline}
\label{eq:forS}
\left| \int_{2}^{\infty} \(I_{\frac{1}{2}}\(\frac{\lambda(n)}{t}\)\)' \sum_{c\leq t} \frac{A_c(n)}{c} dt \right|\\
\leq \int_{2}^{\infty}  \frac{\lambda(n)}{2t^2} 
\(I_{-\frac{1}{2}}\(\frac{\lambda(n)}{t}\) +I_{\frac{3}{2}}\(\frac{\lambda(n)}{t}\)\)
\left| \sum_{c\leq t}   \frac{A_c(n)}{c}\right|dt.
\end{multline}
Let $f(x) =  \(I_{-\frac{1}{2}}(x) + I_{\frac{3}{2}}(x)\) x^{\frac{1}{2}}e^x$.
We have $f'(x)>0$, so
$f(x) \leq f(1) < 4.146$ for $x\leq 1$
and
\be
\label{eq:ineg1/2<1}
  I_{-\frac{1}{2}}(x) + I_{\frac{3}{2}}(x) <4.146 x^{-\frac{1}{2}}e^{-x} \qquad \text{ for } x\leq 1.
\ee
Let $g(x) =  \(I_{-\frac{1}{2}}(x) + I_{\frac{3}{2}}(x)\) x^{\frac{1}{2}}e^{-x}$. We have $g'(x) > 0$, so
$g(x) \leq \lim_{x\to\infty} g(x) < 0.798$ for $x\geq 1$. Thus,
\be
\label{eq:ineg1/2>1}
 I_{-\frac{1}{2}}(x) + I_{\frac{3}{2}}(x)<0.798  x^{-\frac{1}{2}} e^{x} \qquad \text{ for } x\geq 1.
\ee
Using  \eqref{eq:forS}, \eqref{eq:ineg1/2<1}, and \eqref{eq:ineg1/2>1}   we see that
\begin{multline} 
\label{eq:int2inftybound}
\la \int_{2}^{\infty} \(I_{\frac{1}{2}}\(\frac{\lambda(n)}{t}\)\)' \sum_{c\leq t} \frac{A_c(n)}{c} dt\ra\\
\leq  0.399\sqrt{\lambda(n)}  \int_{2}^{\lambda(n)}  
  e^{\frac{\lambda(n)}{t}}t^{-\frac{3}{2}} \la  \sum_{c\leq t} \frac{A_c(n)}{c}\ra  dt 
+   2.073 \sqrt{\lambda(n)}  \int_{\lambda(n)}^{\infty} 
  t^{-\frac{3}{2}} \la   \sum_{c\leq t} \frac{A_c(n)}{c}\ra   dt.
  \end{multline}
   Using the trivial bound $| A _c(n)|\leq c$
and Lemma \ref{lem:intTaylorexp}
 we see that 
the first term in \eqref{eq:int2inftybound}
is bounded by
\be
\label{eq:FirstIntegral}
4.515 \, \frac{e^{\frac{\lambda(n)}{2}}}{\sqrt{\lambda(n)}}.
\ee
From Corollary \ref{ex:KloostermanBound1/4}
 we see that the second term in 
\eqref{eq:int2inftybound} is bounded by 
\be
\label{eq:midrdainfty}
  24.88 \(19094.8  \tau((24n-23)^2)  (n-1/24)^{\frac{1}{4}} 
 +25.35(12 +\log \lambda(n))\)  \lambda(n)^{\frac{5}{12}}.
\ee
We can suppose $n\geq 10 000$. Then by \eqref{eq:midrdainfty}
 the second term in 
\eqref{eq:int2inftybound} is bounded by 
\be
\label{eq:rdainfty}
\frac{0.001e^{\frac{\lambda(n)}{2}} }{\sqrt{\lambda(n)}}.
\ee
From  \eqref{eq:int2inftybound},
 \eqref{eq:FirstIntegral}, and \eqref{eq:rdainfty} we obtain
 \bee
\la \int_{2}^{\infty} I_{\frac{1}{2}} \( \frac{\lambda(n)}{t} \)'  \sum_{c\leq t}   \frac{A_c(n)}{c}dt \ra
\leq \frac{4.516e^{\frac{\lambda(n)}{2}}}{\sqrt{\lambda(n)}}.
\eee
Noting that 
\bee
|E_S(n)| \leq \sqrt{24\pi} \lambda(n)^{\frac{1}{2}} \(\la I_{\frac{1}{2}}\(\frac{\lambda(n)}{2}\) \ra
+ \la \int_{2}^{\infty} I_{\frac{1}{2}}\( \frac{\lambda(n)}{t}\)' \sum_{c\leq t} \frac{A_c(n)}{c} \, dt \ra \),
\eee
the Theorem follows for $\lambda(n)>256$. For $\lambda(n)\leq 256$ it can be verified by direct computation.
\end{proof}

We have the following result for $p(n)$.
\begin{lemma}
\label{lem:p}
For $n\geq 1$ we have
\bee
p(n) = \frac{2\sqrt{3}}{24n-1} \(1-\frac{1}{\lambda(n)}\)e^{\lambda(n)} +E_p(n)
\eee
where
\bee
E_{p}(n) \leq \frac{5 e^{\frac{\lambda(n)}{2}}}{24n-1}.
\eee
\end{lemma}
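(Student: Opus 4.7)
The plan is to derive a Rademacher-type exact formula for $p(n)$ and then mimic the proof of Theorem \ref{thm:S}. Specifically, combining \eqref{eq:sptSp} with \eqref{eq:AA} and \eqref{eq:AASn} isolates
$$p(n) = \frac{2\pi}{(24n-1)^{3/4}} \sum_{c=1}^\infty \frac{A_c(n)}{c} I_{3/2}\!\left(\frac{\lambda(n)}{c}\right).$$
Partial summation (using $A_1(n)=1$), exactly as in the derivation of \eqref{eq:SnPartialSummation}, then yields
$$p(n) = \frac{2\pi}{(24n-1)^{3/4}}\left[I_{3/2}(\lambda(n)) - I_{3/2}\!\left(\tfrac{\lambda(n)}{2}\right) - \int_2^\infty \left(I_{3/2}\!\left(\tfrac{\lambda(n)}{t}\right)\right)'\sum_{c\leq t}\frac{A_c(n)}{c}\,dt\right].$$

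Next I would extract the main term via the elementary identity $I_{3/2}(x)=\sqrt{\tfrac{2}{\pi x}}\bigl(\cosh x - \sinh x / x\bigr)$. A direct computation of the prefactor $\frac{2\pi}{(24n-1)^{3/4}}\sqrt{\tfrac{2}{\pi\lambda(n)}} = \frac{4\sqrt{3}}{24n-1}$ shows that the $I_{3/2}(\lambda(n))$ contribution produces exactly the claimed main term $\frac{2\sqrt{3}}{24n-1}\bigl(1-1/\lambda(n)\bigr)e^{\lambda(n)}$ plus a piece of size $O\!\bigl(e^{-\lambda(n)}/(24n-1)\bigr)$ coming from the $e^{-\lambda(n)}$ parts of $\cosh$ and $\sinh$.

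The same closed-form expansion applied at argument $\lambda(n)/2$ gives
$$\frac{2\pi}{(24n-1)^{3/4}}I_{3/2}\!\left(\tfrac{\lambda(n)}{2}\right)=\frac{2\sqrt{6}}{24n-1}\!\left(1-\tfrac{2}{\lambda(n)}\right)e^{\lambda(n)/2}+\text{(exponentially smaller)}.$$
Since $2\sqrt{6}\approx 4.899$, this term alone already accounts for essentially the entire allowed budget of $\frac{5\,e^{\lambda(n)/2}}{24n-1}$. For the integral tail I would use the Bessel recurrence $(I_{3/2}(\lambda(n)/t))' = -\frac{\lambda(n)}{2t^2}\bigl(I_{1/2}+I_{5/2}\bigr)(\lambda(n)/t)$, derive elementary two-sided bounds for $I_{1/2}+I_{5/2}$ in the spirit of \eqref{eq:ineg1/2<1} and \eqref{eq:ineg1/2>1}, split the integral at $t=\lambda(n)$, apply the trivial bound $|A_c(n)|\leq c$ with Lemma \ref{lem:intTaylorexp} on $[2,\lambda(n)]$, and apply Corollary \ref{ex:KloostermanBound1/4} on $[\lambda(n),\infty)$. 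After multiplication by $\frac{2\pi}{(24n-1)^{3/4}}$ each of these contributions is $O\!\bigl(e^{\lambda(n)/2}/((24n-1)\sqrt{\lambda(n)})\bigr)$, hence negligible for $\lambda(n)$ large.

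The main obstacle is purely quantitative: because the $I_{3/2}(\lambda(n)/2)$ piece alone contributes nearly $\frac{4.9\,e^{\lambda(n)/2}}{24n-1}$, only a budget of roughly $\frac{0.1\,e^{\lambda(n)/2}}{24n-1}$ remains for the integral remainder and the exponentially small tails. The $\lambda(n)^{-1/2}$ saving on the integral closes this gap for $\lambda(n)$ sufficiently large, and the finitely many remaining values of $n$ are verified by direct computation, exactly as in the final sentence of the proof of Theorem \ref{thm:S}.
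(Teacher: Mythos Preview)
The paper's own proof is a one-line citation of Lehmer's classical error estimate \cite[(4.14)]{lehmer} for the Rademacher series, followed by a finite check.  Your route---redoing the partial-summation analysis of Theorem~\ref{thm:S} with $I_{3/2}$ in place of $I_{1/2}$---is in principle a reasonable alternative, but there is a genuine quantitative error that prevents it from yielding the constant $5$.

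You claim that after multiplication by the prefactor $\frac{2\pi}{(24n-1)^{3/4}}$ the integral over $[2,\lambda(n)]$ contributes $O\!\bigl(e^{\lambda(n)/2}/((24n-1)\sqrt{\lambda(n)})\bigr)$.  This is off by a factor of $\sqrt{\lambda(n)}$.  Using the trivial bound $|A_c(n)|\leq c$ and Lemma~\ref{lem:intTaylorexp} exactly as in the proof of Theorem~\ref{thm:S}, the integral before the prefactor is bounded by roughly $\frac{4.5\,e^{\lambda(n)/2}}{\sqrt{\lambda(n)}}$.  But the prefactor satisfies
\[
\frac{2\pi}{(24n-1)^{3/4}}
=\frac{1}{24n-1}\cdot 2\pi(24n-1)^{1/4}
=\frac{1}{24n-1}\cdot\sqrt{24\pi}\,\lambda(n)^{1/2},
\]
so the $\sqrt{\lambda(n)}$ in the denominator is cancelled, not reinforced.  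The net contribution of the integral is therefore about $\frac{39\,e^{\lambda(n)/2}}{24n-1}$, mirroring the $39.2$ that appears implicitly in the $44.11$ of Theorem~\ref{thm:S}.  Added to the $\approx\frac{4.9\,e^{\lambda(n)/2}}{24n-1}$ from the $I_{3/2}(\lambda(n)/2)$ term, your argument gives a bound of order $\frac{44\,e^{\lambda(n)/2}}{24n-1}$ for \emph{every} $n$, so no finite verification can recover the constant $5$.  To salvage the approach you would need to replace the trivial bound $|A_c(n)|\leq c$ on $[2,\lambda(n)]$ by something sharper (e.g.\ Lehmer's Weil-type bound \eqref{eq:lehmerAc}), and then track constants far more carefully than Lemma~\ref{lem:intTaylorexp} allows; this essentially reproduces Lehmer's own analysis, which is why the paper simply cites it.
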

\begin{proof}
For $\lambda(n)>573$ this follows from \cite[(4.14)]{lehmer}. For $\lambda(n)\leq 573$ it can be verified by direct computation.
\end{proof}

\begin{proof}[Proof of Theorem \ref{thm:sptbound}]
The theorem follows by \eqref{eq:sptSp}, Theorem \ref{thm:S}, and Lemma~\ref{lem:p}. 
\end{proof}

\begin{proof}[Proof of Theorem \ref{thm:sptbound2}]
The proof is similar to the proof of Theorem \ref{thm:sptbound}.
\end{proof}

\begin{proof}[Proof of Corollary \ref{cor:spt}] 
  From Lemma \ref{lem:p} we see that \bee
  \frac{\sqrt{24n-1}}{2\pi}  p(n) = 
  \frac{\sqrt{3}}{\pi\sqrt{24n-1}}e^{\lambda(n)} 
  -  \frac{\sqrt{3}}{\pi\sqrt{24n-1}} \cdot \frac{e^{\lambda(n)}}{\lambda(n)}
  +\frac{\sqrt{24n-1}}{2\pi} E_p(n). 
  \eee
  By Lemma \ref{lem:p} and
Theorem \ref{thm:sptbound} the result follows.
  \end{proof}

\noindent {\bf Acknowledgements.}
The author thanks Scott Ahlgren for many
useful suggestions 
and 
Frank Garvan for a helpful conversation about computing values of $\spt(n)$.
The author was partially supported by
the Alfred P. Sloan Foundation's MPHD Program, awarded in 2017.

\bibliographystyle{alpha}
\bibliography{spt}

\end{document}